\numberwithin{equation}{section}
\newcommand{\nc}[2]{ \newcommand{#1}{#2} }
\nc{\avint}{ {- \hspace{-3.5mm} \int} }  
\nc{\R}{\mathrm{I \! R}}  
\nc{\N}{\mathrm{ I \! N}}  
\newcommand{\closure}[1]{ \stackrel{\rule{.1 in}{.01 in}}{#1} }
\newcommand{\ohclosure}[1]{ \stackrel{\rule{.15 in}{.01 in}}{#1} }
\newcommand{\dclosure}[1]{ \stackrel{\rule{.2 in}{.01 in}}{#1} }
\newcommand{\qclosure}[1]{ \stackrel{\rule{.4 in}{.01 in}}{#1} }
\newcommand{\qhclosure}[1]{ \stackrel{\rule{.45 in}{.01 in}}{#1} }
\newcommand{\pclosure}[1]{ \stackrel{\rule{.5 in}{.01 in}}{#1} }
\DeclareRobustCommand{\rchi}{{\mathpalette\irchi\relax}}
\newcommand{\irchi}[2]{\raisebox{\depth}{$#1\chi$}}
\newcommand{\bigrchi}{\mathlarger{\mathlarger{\rchi}}}
\newcommand{\refeqn}[1]{ (\!\!~\ref{eq:#1}) } 
\newcommand{\refthm}[1]{ \!\!~\ref{#1} }    
\nc{\Holder}{H\"{o}lder\ }
\nc{\ith}{ \ensuremath{\text{i}^{\text{th}}} }
\nc{\jth}{ \ensuremath{\text{j}^{\text{th}}} }
\nc{\kth}{ \ensuremath{\text{k}^{\text{th}}} }
\nc{\dst}{ \ensuremath{\text{1}^{\text{st}}_{\delta}} }
\nc{\dnd}{ \ensuremath{\text{2}^{\text{nd}}_{\delta}} }
\nc{\ost}{ \ensuremath{\text{1}^{\text{st}}} }
\nc{\tnd}{ \ensuremath{\text{2}^{\text{nd}}} }
\nc{\curl}{ \nabla \times }
\nc{\Div}{ \nabla \cdot }
\nc{\DC}{K}
\newcommand{\BVPc}[4]{  
  \begin{equation}
        \begin{array}{rl}
           #1 & \ \text{in}
               \ \ #4 \vspace{.05in} \\
           #2 & \ \text{on} \ \ \partial #4 \;,
        \end{array}
  \label{eq:#3}
  \end{equation}    }
\newcommand{\BVPnA}[6]{  
  \begin{equation}
        \begin{array}{rll}
           #1 \!\!\!& = #2 & \ \text{in}
               \ \ #6 \vspace{.05in} \\
           #3 \!\!\!& = #4 & \ \text{on} \ \ \partial #6 \;
        \end{array}
  \label{eq:#5}
  \end{equation}    }
\nc{\Ppl}{ \mathcal{M}^{+} }  \nc{\Pmn}{ \mathcal{M}^{-} }
\nc{\smiley}{ $\stackrel{\because}{\smile} \;$ }
\newcommand{\dist}{\mathrm{dist}}
\renewcommand{\eqref}[1]{(\ref{#1})}
\newcommand{\eat}[1]{}
\newcommand{\bsmall}{\begin{array}[c]{c}}
\newcommand{\esmall}{\end{array}}
\theoremstyle{plain}
\theoremstyle{definition}
\theoremstyle{plain}
\newtheorem{theorem}{Theorem}[section]
\newtheorem{lemma}[theorem]{Lemma}
\theoremstyle{definition}
\newtheorem{remark}[theorem]{Remark}
\def\qed{\hfill\rule{1ex}{1ex}\\}
\numberwithin{equation}{section}
\def\>{>_{\sigma}}
\title[Motion of Singular Points]{Nondegenerate Motion of Singular Points in Obstacle Problems with Varying Data}
\author[Armstrong]{Niles Armstrong}
\address{Kansas State University, Mathematics Department, 
138 Cardwell Hall, 
Manhattan, KS 66506}
 \email{niles@math.ksu.edu}
\author[Blank]{Ivan Blank}
\address{Kansas State University, Mathematics Department, 
138 Cardwell Hall, 
Manhattan, KS 66506}
 \email{blanki@math.ksu.edu}
\begin{document}
\baselineskip 18pt

\begin{abstract}
  Recent work by Serfaty and Serra give a formula for the velocity of the free
boundary of the obstacle problem at regular points (\cite{SS}), and much older work by King, Lacey, and
V\'{a}zquez gives an example of a singular free boundary point (in the Hele-Shaw flow) that remains stationary for
a positive amount of time (\cite{KLV}).  
The authors show how singular free boundaries in the obstacle problem in some settings move immediately
in response to varying data.
Three applications of this result are given, and in particular, the authors
show a uniqueness result: For sufficiently smooth elliptic divergence form operators on domains in $\R^n$ and for the
Laplace-Beltrami operator on a smooth manifold, the boundaries of distinct mean value sets (of the type found in
\cite{BH1} and \cite{BBL}) which are centered at the same point do not intersect.

\end{abstract}
\maketitle

\setcounter{section}{0}

\section{Introduction}   \label{Intro}

In his Fermi Lectures for the obstacle problem, Caffarelli stated the following theorem:
\begin{theorem} [Mean Value Theorem for Divergence Form Elliptic PDE]  \label{BCHMVT}
Let $L$ be any divergence form elliptic operator of the form $\partial_i (a^{ij}(x) \partial_j)$ with ellipticity
$[\lambda, \; \Lambda].$  For any $x_0 \in \Omega,$ there exists an increasing family $D_r(x_0)$ which
satisfies the following:
\begin{enumerate}
  \item $B_{cr}(x_0) \subset D_r(x_0) \subset B_{Cr}(x_0),$ with $c, C$ depending only on $n, \lambda,$ and $\Lambda.$
  \item For any $v$ satisfying $Lv \geq 0$ and $r<s$, we have
       \begin{equation}
              v(x_0) \leq \frac{1}{|D_r(x_0)|}\int_{D_r(x_0)} v(x) \; dx \leq  \frac{1}{|D_s(x_0)|}\int_{D_s(x_0)} v(x) \; dx .
       \label{eq:MVTres}
       \end{equation}
\end{enumerate}
Finally, for any $R > 0$ the set $D_R(x_0)$ is the noncontact set of the following obstacle problem:  \\
$u \leq G(\cdot, x_0)$ (with $G(\cdot, x_0)$ denoting the Green's function) such that
\BVPnA{L(u)}{- \bigrchi_{\{u < G\}} R^{-n}}{u}{G(\cdot,x_0)}{KeyObProb}{B_M(x_0)}
where $B_M(x_0) \subset \R^n$ and $M >0$ is sufficiently large.
\end{theorem}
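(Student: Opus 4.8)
The plan is to take $D_R(x_0)$ to be the noncontact set $\{u<G(\cdot,x_0)\}$ of the obstacle problem \eqref{eq:KeyObProb} and to read off all three assertions from the variational inequality for $u$ together with a single application of Green's second identity. Since $G(\cdot,x_0)$ has infinite Dirichlet energy, rather than minimize $v\mapsto\int_{B_M(x_0)}\bigl(\frac12 a^{ij}\partial_i v\,\partial_j v-R^{-n}v\bigr)\,dx$ directly over $\{v\le G(\cdot,x_0)\}$ I would first approximate the obstacle from below by bounded Lipschitz functions $G_k$ agreeing with $G(\cdot,x_0)$ outside $B_{1/k}(x_0)$, minimize over $\{v\le G_k\}$ (a standard finite-energy obstacle problem), and pass to the limit; this is the construction of \cite{BH1}, which also supplies the basic regularity. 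The resulting $u=u_R$ satisfies $Lu\ge -R^{-n}$ in $B_M(x_0)$ with equality on the open set $D_R:=\{u<G(\cdot,x_0)\}$. The crucial preliminary point is that $x_0\in D_R$: if $u$ coincided with $G(\cdot,x_0)$ near $x_0$ then $Lu$ would carry the singular part $-\delta_{x_0}$ there, contradicting $Lu\ge-R^{-n}\in L^\infty_{\mathrm{loc}}$; hence $u$ is bounded near $x_0$, lies in $W^{1,2}(B_M(x_0))$, and $Lu=-R^{-n}\chi_{D_R}$ distributionally (using also that $u\equiv G(\cdot,x_0)$ on the contact set, where $G(\cdot,x_0)$ is $L$-harmonic).

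Granting part (1) for the moment --- so that, with $M$ large, $\overline{D_R}\Subset B_M(x_0)$ and $u$ coincides with $G(\cdot,x_0)$ in a full neighborhood of $\partial B_M(x_0)$ --- I would derive part (2) from the identity
\[
  R^{-n}\int_{D_R(x_0)} v\,dx \;-\; v(x_0)\;=\;\int_{B_M(x_0)}\bigl(G(\cdot,x_0)-u\bigr)\,d(Lv),
\]
valid for continuous $v$. One writes Green's second identity once for the pair $(v,u)$ and once for $(v,G(\cdot,x_0))$ on $B_M(x_0)$; because $u$ and $G(\cdot,x_0)$ agree near $\partial B_M(x_0)$ the two boundary integrals are identical and cancel on subtraction, leaving $\int v\,d(-Lu)-\int v\,d(-LG(\cdot,x_0))=\int(u-G(\cdot,x_0))\,d(-Lv)$, into which one substitutes $-Lu=R^{-n}\chi_{D_R}$ and $-LG(\cdot,x_0)=\delta_{x_0}$. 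Taking $v\equiv1$ (so $Lv=0$ and the right side vanishes) gives the normalization $|D_R(x_0)|=R^n$; for a general $v$ with $Lv\ge0$ the right side is nonnegative since $G(\cdot,x_0)-u\ge0$, which is exactly $v(x_0)\le\frac1{|D_R(x_0)|}\int_{D_R(x_0)}v$.

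For the monotonicity inequality I would compare $u_r$ and $u_s$ for $r<s$, where $u_t$ solves \eqref{eq:KeyObProb} with forcing $t^{-n}$. By the standard argument --- test the variational inequality for $u_r$ against $\max(u_r,u_s)$ and that for $u_s$ against $\min(u_r,u_s)$, subtract, and use $r^{-n}>s^{-n}$ together with ellipticity --- one gets $\nabla(u_s-u_r)^+\equiv0$; since $u_r=u_s$ on $\partial B_M(x_0)$ this forces $u_r\ge u_s$, hence $D_r(x_0)\subset D_s(x_0)$ (so the family is increasing) and $G(\cdot,x_0)-u_r\le G(\cdot,x_0)-u_s$. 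Subtracting the displayed identity for $r$ from the one for $s$ now yields $\frac1{|D_s|}\int_{D_s}v-\frac1{|D_r|}\int_{D_r}v=\int(u_r-u_s)\,d(Lv)\ge0$ whenever $Lv\ge0$, which completes \eqref{eq:MVTres}.

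The real obstacle is part (1), the two-sided ball containment --- the property the normalization $R^{-n}$ is tuned for. I would remove the scale via the dilation $x=x_0+r\xi$, under which \eqref{eq:KeyObProb} at scale $r$ for $L$ becomes the same problem at scale $1$ for the $[\lambda,\Lambda]$-elliptic operator with coefficients $a^{ij}(x_0+r\,\cdot)$, with $G(\cdot,x_0)$ replaced by that operator's Green's function with pole at the origin; it then suffices to sandwich the corresponding noncontact set $\tilde D_1\ni0$ between two fixed balls, uniformly over this class of operators. Since $\tilde u$ is the Green potential of $\chi_{\tilde D_1}$, the set $\tilde D_1$ is the partial-balayage (quadrature) domain obtained by sweeping $\delta_0$ onto Lebesgue measure, and I would run barrier comparisons built from the two-sided Littman--Stampacchia--Weinberger bounds $c|\xi|^{2-n}\le\tilde G(\xi,0)\le C|\xi|^{2-n}$ (for $n\ge3$; $n=2$ is analogous with the logarithmic potential): a barrier of the form $\tilde G(\cdot,0)-\gamma$ confines $\tilde D_1$ to a fixed ball, and then, $\tilde u$ being bounded near the origin while $\tilde G(\cdot,0)$ is large there, a comparison with the explicit solution of $-\tilde L\tilde u=1$ on a ball together with the Harnack inequality forces $\tilde D_1$ to contain a fixed ball. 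This barrier analysis in the divergence-form setting is the technical core and is carried out in \cite{BH1}, to which I would appeal for the details; everything else above is soft.
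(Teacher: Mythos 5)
The paper does not prove Theorem~\ref{BCHMVT} itself; it states the result (from Caffarelli's Fermi Lectures) and defers the proof entirely to \cite{BH1}, so there is no in-paper argument to compare against. Your sketch is correct and is essentially the route taken in that reference: take $D_R(x_0)$ to be the noncontact set of the obstacle problem, verify $x_0\in D_R(x_0)$ so the height function $w=G(\cdot,x_0)-u_R\ge 0$ satisfies $Lw=R^{-n}\chisub{D_R(x_0)}-\delta_{x_0}$, apply Green's second identity (with the boundary terms cancelling because $w$ and $\nabla w$ vanish near $\partial B_M(x_0)$) to obtain $R^{-n}\int_{D_R(x_0)}v - v(x_0)=\int w\,d(Lv)$, read off the normalization $|D_R(x_0)|=R^n$ at $v\equiv1$, deduce the sub-mean-value inequality from $w\ge 0$ and the monotonicity in $R$ from the comparison $u_r\ge u_s$ for $r<s$, and establish the two-sided ball containment by rescaling together with barriers built from the Littman--Stampacchia--Weinberger estimates on the Green's function. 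You correctly single out the ball containment as the genuine technical core and defer its details to \cite{BH1}, which is the one part of your sketch that is not self-contained; the only other place that would need a word in a full write-up is the mild regularity one needs of a general $L$-subsolution $v$ to run the Green's identity, a standard point handled by mollification.
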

\noindent
This theorem is a clear analogue of the classical mean value theorem for balls for the Laplacian, but here the role of
the balls is replaced with the sets, $D_r(x_0) = \{ u_r(x) < G(x, x_0) \}$ where $u_r$ is the solution to
Equation\refeqn{KeyObProb}with $R = r.$  Of course, this theorem immediately leads to questions about exactly
what can be said about these $D_r(x_0)$'s.  In fact, it is rather trivial to prove the converse of this theorem:
\begin{theorem}[Converse MVT for Divergence Form Elliptic PDE]  \label{ConvMVT}
If $\{D_r(x_0)\}_{ \{r > 0, x_0 \in \R^n \} }$ is a complete collection of sets obtained for a specific operator, $L,$
of the form given in Theorem\refthm{BCHMVT}\!\!, and $v$ is a function such that
       \begin{equation}
              v(x_0) \equiv \frac{1}{|D_r(x_0)|}\int_{D_r(x_0)} v(x) \; dx
       \label{eq:MVTres2}
       \end{equation}
whenever $D_r(x_0) \subset \Omega,$ then $Lv = 0$ in $\Omega.$
\end{theorem}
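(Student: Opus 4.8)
The plan is to adapt the classical proof that a continuous function enjoying the mean value property over balls is harmonic, with the sets $D_r(x_0)$ now playing the role of the balls and with part (2) of Theorem~\ref{BCHMVT} supplying, by applying it to $\pm v_L$, the two-sided version of the inequality \refeqn{MVTres}. Concretely, I would (i) show that \refeqn{MVTres2} forces $v$ to coincide a.e.\ with a continuous function; (ii) on a small ball compare $v$ with the $L$-harmonic function having the same boundary values; and (iii) conclude that the difference vanishes by a maximum-principle argument built on \refeqn{MVTres2} itself.

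For (i), fix a small $r$ and observe that the right-hand side of \refeqn{MVTres2}, namely $x_0 \mapsto |D_r(x_0)|^{-1}\int_{D_r(x_0)} v$, is continuous in $x_0$: by part (1) of Theorem~\ref{BCHMVT} the volume $|D_r(x_0)|$ is bounded below, and $x_0 \mapsto D_r(x_0)$ varies continuously in measure because the Green's function $G(\cdot,x_0)$ depends continuously on its pole, the solution $u_r$ of \refeqn{KeyObProb} depends continuously on its obstacle, and --- by non-degeneracy coming from the right-hand side $-\bigrchi_{\{u_r<G\}} r^{-n}$ being bounded below on the noncontact set --- the free boundary $\partial\{u_r = G\}$ has Lebesgue measure zero. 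Hence $v$ equals a continuous function off a null set, and we may take $v \in C(\Omega)$. I expect this to be the only genuine technical point; if $v$ is assumed continuous (or in $W^{1,2}_{\mathrm{loc}}$) from the outset it may be skipped, which is presumably why the result is advertised as trivial.

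For (ii)--(iii), fix a ball $B \subset\subset \Omega$ and let $v_L$ solve $L v_L = 0$ in $B$ with $v_L = v$ on $\partial B$; this is solvable with $v_L \in C(\overline{B})$ by approximating the continuous boundary data uniformly by smooth data and passing to the limit using interior estimates and boundary barriers. Applying part (2) of Theorem~\ref{BCHMVT} to $v_L$ and to $-v_L$ --- both satisfy $L(\pm v_L) \ge 0$ --- and combining the two resulting inequalities yields $v_L(x_0) = |D_r(x_0)|^{-1}\int_{D_r(x_0)} v_L$ whenever $D_r(x_0) \subset B$. Therefore $h := v - v_L$ is continuous on $\overline{B}$, vanishes on $\partial B$, and satisfies \refeqn{MVTres2} for every admissible pair $(x_0,r)$. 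Now let $M = \max_{\overline{B}} h$; if $M$ is attained at an interior point $x_0$, choose $r$ with $\overline{D_r(x_0)} \subset B$, so that $M = h(x_0) = |D_r(x_0)|^{-1}\int_{D_r(x_0)} h$ together with $h \le M$ forces $h \equiv M$ on $D_r(x_0)$, hence on the ball $B_{cr}(x_0) \subset D_r(x_0)$. Thus $\{h = M\}$ is open in $B$; being also closed and nonempty it is all of $B$, so $h \equiv M$ on $\overline{B}$ and $M = \max_{\partial B} h = 0$. In every case $\max_{\overline{B}} h = 0$, and the same argument applied to $-h$ gives $h \equiv 0$, i.e.\ $v = v_L$, so $Lv = 0$ in $B$ and, $B$ being arbitrary, $Lv = 0$ in $\Omega$.

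A slightly different route avoids constructing $v_L$: integrating $L$ by parts over $B_M(x_0)$ against the test function $w_r := G(\cdot,x_0) - u_r$ --- which vanishes together with its conormal derivative near $\partial B_M(x_0)$, so that no regularity of the free boundary is needed --- converts \refeqn{MVTres2} into $\int_{D_r(x_0)} w_r\, Lv = 0$; since $w_r > 0$ exactly on $D_r(x_0)$ and $D_r(x_0) \downarrow \{x_0\}$ as $r \downarrow 0$, dividing by $\int w_r$ and letting $r \to 0$ gives $Lv(x_0) = 0$. This version, however, requires separately establishing that $Lv$ is a continuous function, whereas the comparison argument above does not, which is why I would prefer the latter.
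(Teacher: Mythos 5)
The paper itself never gives a proof of Theorem~\ref{ConvMVT}; the authors only remark that it is ``rather trivial to prove,'' so there is no written argument to compare against. Judged on its own merits, your main route is correct and is the direct analogue of the classical ``mean value property implies harmonicity'' proof, with the $L$-harmonic replacement $v_L$ on a ball $B$ playing the role of the Poisson integral and the two-sided inequality coming from applying part~(2) of Theorem~\ref{BCHMVT} to $\pm v_L$. The maximum-principle step in (iii) is clean: the mean value identity forces $\{h = M\}$ to be open, and since $h$ is continuous it is also relatively closed, so $h\equiv M$ on the connected set $B$, whence $M=0$ from the boundary values. You are also right that the formula holds for $v_L$ with the \emph{same} sets $D_r(x_0)$ even though $v_L$ is only $L$-harmonic on $B$: the identity $r^{-n}\int_{D_r(x_0)}\varphi - \varphi(x_0)=\int w_r\,L\varphi$ is localized to $D_r(x_0)\subset B$.

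Two small points worth tightening. First, your step~(i), which you correctly flag as the only genuine technical content, is stated a bit loosely: continuity in measure of $x_0\mapsto D_r(x_0)$ does not follow merely from continuity of $w_r$ plus the free boundary having measure zero; you should say explicitly that nondegeneracy quantifies this --- a uniform perturbation of size $\epsilon$ of the height function moves the positivity set by a set of measure $O(\sqrt{\epsilon})$ --- which is what turns ``$\partial D_r$ has measure zero'' into actual $L^1$-continuity of the indicator. Also note (from \cite{BH1}) that $|D_r(x_0)|\equiv r^n$ exactly, which you get for free by testing $Lw_r = r^{-n}\chisub{\Omega(w_r)} - \delta_{x_0}$ against the constant function $1$; this removes the need for the lower volume bound and in fact simplifies the continuity argument. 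Second, your alternative integration-by-parts route is almost certainly the ``trivial'' proof the authors have in mind: testing the obstacle-problem equation against $v$ converts \refeqn{MVTres2} into $\int_{D_r(x_0)} w_r\, Lv = 0$, and since the $w_r$ are strictly positive on $D_r(x_0)$ and concentrate at $x_0$ as $r\downarrow 0$, the vanishing of these averages forces $Lv=0$. As you observe, this route needs $Lv$ to exist as a (locally integrable or continuous) function, so your comparison argument is the one that proves the statement as written, with no regularity hypotheses on $v$ beyond what is needed for \refeqn{MVTres2} to make sense.
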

\noindent
The upshot is that all of the information contained in the operator must be contained within the collection of
mean value sets and vice versa.

Theorem\refthm{BCHMVT}was proven by Blank and Hao in detail in \cite{BH1}, and an analogous result was shown in \cite{BBL}
for the Laplace-Beltrami operator on Riemannian manifolds.  Since then, the authors have been studying
properties of these mean value sets.  Indeed, the first author showed that even for some rather simple operators, $L,$
that can be chosen to be arbitrarily close to the Laplacian these sets are not necessarily convex
(see \cite{Ar}), and besides generalizing the result to the Laplace-Beltrami operator, the second author has 
shown some regularity properties of these sets (see \cite{BBL} and \cite{AB}).  One important question that has
remained is the following one:  If $y \in \partial D_r(x_0),$ then is it possible for there to exist an $\tilde{r} \ne r$ such
that $y \in \partial D_{\tilde{r}}(x_0)$?  One can restate this question as whether $r < s$ merely implies
$D_r(x_0) \subset D_s(x_0),$ or does it imply the stronger result: $\qhclosure{D_r(x_0)} \; \subset D_s(x_0)$?  In this
paper we show that in the case of the Laplace-Beltrami operator on Riemannian manifolds, and in the case where the
operator $L$ has coefficients in $C^{1,1},$ the answer is yes.  (When the coefficients are not $C^{1,1}$ the question
is still open.)

In fact, there is \textit{almost} a proof based on the Hopf Lemma that has already been pointed out in \cite{BBL}.
Along these lines, if we assume that $y_0 \in \partial D_r(x_0) \cap \partial D_s(x_0),$ and we assume that
$\partial D_r(x_0)$ is regular at $y_0,$ then by invoking Caffarelli's famous free boundary regularity theorem for
the obstacle problem (see \cite{C1} and/or \cite{C2}), then we are guaranteed that there will exist a ball
$B_{\rho}(z_0)$ satisfying:
\begin{enumerate}
    \item $B_{\rho}(z_0) \subset D_r(x_0),$ and
    \item $\partial B_{\rho}(z_0) \cap \partial D_r(x_0) \cap \partial D_s(x_0) = y_0.$
\end{enumerate}
Now if we let $u_r$ and $u_s$ be the solutions to the problem in Equation\refeqn{KeyObProb}when $R = r$ and 
$R = s$ respectively, then it follows that $v(x) := u_r(x) - u_s(x)$ will satisfy the following:
\begin{enumerate}
     \item $v \geq 0$ in $B_{\rho}(z_0),$
     \item $Lv = s^{-n} - r^{-n} < 0$ in $B_{\rho}(z_0),$ and
     \item $v(y_0) = 0.$
\end{enumerate}
In this situation we can apply the Hopf Lemma to guarantee that $\nabla v(y_0) \ne 0.$  On the other hand
$$\nabla v(y_0) = \nabla u_r(y_0) - \nabla u_s(y_0) = \nabla G(x_0, y_0) - \nabla G(x_0, y_0) = 0$$
which gives us a contradiction.

Of course the bad news in the ``proof'' above is that we assumed that $y_0$ was a regular point of the
free boundary.  Now in the most typical pictures of free boundaries with singular points, it should be even easier
to touch the boundary of $D_r(x_0)$ with a ball, in spite of these examples, Schaeffer gave other examples of
contact sets in the obstacle problem with cantor-like structures (see \cite{S}) and the recent work of Figalli and
Serra that yields some nice regularity results for the singular set seems to require that the operator be the
Laplacian (see \cite{FS}).

One can also ask if it is possible to repair the proof above so that it continues to hold even at the singular points,
and indeed, that was our first attempt at solving this problem.  In joint work by Alvarado, Brigham, Maz'ya,
Mitrea, and Ziad\'{e}, a sharp form of Hopf's Lemma is shown which does not require touching with a ball; one only
needs to touch with a ``pseudoball'' (see \cite[Theorem 4.4]{ABMMZ}).  Furthermore in Caffarelli's original 1977
Acta paper, he shows ``almost convexity'' conditions which guarantee the existence of a half ball contained in
the noncontact set (see \cite[Corollary 1 and Corollary 2]{C1}).  Unfortunately, the union of the half balls described by
Caffarelli does not contain a pseudoball of the type described in \cite{ABMMZ}, so it appears that this route will not lead
to a proof.

Now of course, one thing that really \textit{is} shown by the argument above is that if there is a situation
with $r \ne s$ and where $\partial D_r(x_0) \cap \partial D_s(x_0)$ is nonempty, then it can only happen at
singular points.  In this respect, and in viewing the flow of $\partial D_t(x_0)$ as we vary $t,$ this situation
should be compared to the results of King, Lacey, and V\'{a}zquez for the Hele-Shaw problem (see \cite{KLV}).
They show that if there is a corner built into the initial data, and if the angle formed satisfies certain inequalities,
then that corner will remain motionless for a while at the beginning of the evolution.  It is also worth observing that
Serfaty and Serra have shown a normal velocity formula for the free boundary in the obstacle problem at regular
points when varying the data, but their result (a \textit{normal} velocity formula) obviously cannot be applied at
a point of the free boundary that does not have a normal vector (see \cite{SS}).

Another attack which could lead to a full proof via the Hopf Lemma would be to expand the work of Figalli and
Serra (\cite{FS}) to include more than the Laplacian, so that the better regularity allowed us to touch the singular
set with an interior ball.  Although even the Figalli/Serra results allow for some lower-dimensional ``anomalous''
points that would need to be handled in order to get a touching ball, so generalization \textit{and} improvement
would be needed for that route, and it is quite likely impossible.  In any case, an examination of the methods
employed in their work reveals arguments that seem to be particular to the Laplacian, and so perturbation
arguments seem like a better attack as opposed to trying to do their work from scratch in a more general setting.
Even though we have not successfully expanded that work, that perturbation approach is related to our third
application of the main idea in this work.   The difficulty there is related to the instability of singular free
boundaries.  Certainly it is a trivial matter to make a singular free boundary that disappears under an appropriate
perturbation.  For example, $u(x) = x^2$ satisfies $\Delta u = \bigrchi_{\{u > 0 \}} f,$
with $f(x) \equiv 2,$ but if you raise the boundary data and/or reduce $f$ anywhere and solve the new
problem, then the free boundary will disappear.  That observation led us to the question of whether or not
we could find a way to make specific perturbations which always led to singular points.  In the third application,
although we do not get results which are precise enough to allow us to generalize \cite{FS}, we do successfully
find a way to approximate singular free boundaries with other singular free boundaries of solutions to obstacle
problems with operators with constant coefficients and which have similar boundary data, and this approximation
may be of independent interest.

In all three applications what has worked is the following idea:  We use the derivative of the solution to the obstacle
problem as a barrier, and using that function we can come to a contradiction where a related function that we can show is
nonnegative must also become negative due to standard regularity and nondegeneracy estimates that we have for
the obstacle problem if the two distinct free boundaries share a common boundary point.  We present three applications
of this idea in this paper, and the first two are simple enough to state here:

\begin{theorem}[Compact Containment of Mean Value Sets, Part I]   \label{CCMVT1}
Under the assumptions of Theorem\refthm{BCHMVT}along with the
assumption that the $a^{ij}$ belong to $C^{1,1}$ the family $\{D_r(x_0)\}$ is always \textit{strictly}
increasing in the sense that $r < s$ implies $\qhclosure{D_r(x_0)} \; \subset D_s(x_0).$
\end{theorem}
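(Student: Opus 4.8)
The plan is to argue by contradiction. Write $D_r := D_r(x_0)$ and $D_s := D_s(x_0)$, let $u_r,u_s$ be the solutions of \eqref{eq:KeyObProb} with $R=r$ and $R=s$, and set $w_r := G(\cdot,x_0)-u_r$, $w_s := G(\cdot,x_0)-u_s$, so that $D_r=\{w_r>0\}$, $D_s=\{w_s>0\}$, and $w_r,w_s\ge 0$. From Theorem~\ref{BCHMVT} we already have $D_r\subset D_s$, and one checks that the conclusion $\overline{D_r}\subset D_s$ is equivalent to $\partial D_r\cap\partial D_s=\emptyset$; so I would fix $y_0\in\partial D_r\cap\partial D_s$ and work toward a contradiction. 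The $C^{1,1}$ hypothesis on the $a^{ij}$ enters exactly here: it upgrades $u_r,u_s$ to optimal $C^{1,1}_{\mathrm{loc}}$ regularity and makes the Caffarelli blow-up/classification theory for $w_r,w_s$ at free boundary points available; for merely bounded measurable coefficients this analysis — and, as the paper notes, the theorem itself — is out of reach.

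First I would record two easy facts. (i) $w_s\ge w_r$ (equivalently $u_r\ge u_s$): outside $D_s$ both sides vanish; on $D_s\setminus\overline{D_r}$ one has $w_r=0\le w_s$; and on $D_r$ the difference $v:=u_r-u_s=w_s-w_r$ satisfies $Lv=s^{-n}-r^{-n}<0$ with $v\ge 0$ on $\partial D_r$, hence $v\ge 0$ in $D_r$ by the minimum principle. (ii) Since $y_0$ lies on both free boundaries, $u_r(y_0)=G(y_0,x_0)=u_s(y_0)$ and $\nabla u_r(y_0)=\nabla G(y_0,x_0)=\nabla u_s(y_0)$ — the derivative of each solution matches that of the obstacle at $y_0$ — so $w_r$ and $w_s$ together with their gradients vanish at $y_0$, and by $C^{1,1}$ regularity $0\le w_r,w_s\le C|x-y_0|^2$ near $y_0$; in particular $\nabla v(y_0)=0$.

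Now split on the nature of $y_0$ as a point of $\partial D_r$. If $y_0$ is regular, Caffarelli's free boundary regularity gives an interior ball $B_\rho(z_0)\subset D_r$ touching $\partial D_r$ only at $y_0$, and the Hopf-lemma argument of the introduction applies without change: $v\ge 0$ and $Lv<0$ in $B_\rho(z_0)$ with $v(y_0)=0$ force $\nabla v(y_0)\ne 0$, against (ii). The real case is $y_0$ singular for $\partial D_r$. Then, along a subsequence $\rho_k\downarrow 0$, the rescalings $w_r(y_0+\rho_k x)/\rho_k^2$ converge locally uniformly to a nonzero, nonnegative, homogeneous quadratic $P_r(x)=\tfrac12 x^\top A_r x$ with $A_r\succeq 0$ and $\operatorname{tr}(a(y_0)A_r)=r^{-n}$ — nonzero by the nondegeneracy estimate $\sup_{B_\rho(y_0)}w_r\gtrsim r^{-n}\rho^2$, and a polynomial because $y_0$ is singular (contact-set density zero). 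Passing to the same subsequence, $w_s(y_0+\rho_k x)/\rho_k^2\to P_s$, a blow-up of $w_s$ at $y_0$, and $w_s\ge w_r$ gives $P_s\ge P_r\ge 0$. If $P_s$ is a half-space solution it vanishes on a half-space $H$, forcing $0\le P_r\le 0$ on $H$, so the polynomial $P_r$ vanishes on an open set, hence everywhere — contradicting $P_r\not\equiv 0$. Otherwise $P_s(x)=\tfrac12 x^\top A_s x$ with $A_s\succeq 0$ and $\operatorname{tr}(a(y_0)A_s)=s^{-n}$, and $Q:=P_s-P_r$ — the blow-up of $v=w_s-w_r\ge 0$ — is a nonnegative quadratic form, so $A_s-A_r\succeq 0$; since $a(y_0)$ is a positive definite symmetric matrix and the trace of a product of positive semidefinite matrices is nonnegative, $0\le\operatorname{tr}\!\big(a(y_0)(A_s-A_r)\big)=s^{-n}-r^{-n}<0$, a contradiction. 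Hence $\partial D_r\cap\partial D_s=\emptyset$ and $\overline{D_r}\subset D_s$.

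The main obstacle I anticipate is the singular case, and within it the blow-up step: one has to be sure that (a) the $C^{1,1}$ coefficients genuinely yield $C^{1,1}_{\mathrm{loc}}$ solutions and that Caffarelli's dichotomy (half-space blow-up at regular points, homogeneous quadratic at singular points) holds for the divergence-form operator $L$ — after an affine normalization $a^{ij}(y_0)=\delta^{ij}$ the operator is a $C^{1,1}$, hence Lipschitz, perturbation of the Laplacian whose lower-order error vanishes at $y_0$, which should suffice; (b) the blow-ups of $w_r$ and $w_s$ can be extracted along a common sequence of scales so that $w_s\ge w_r$ passes to the limit; and (c) the normalizations $\operatorname{tr}(a(y_0)A_r)=r^{-n}$ and $\operatorname{tr}(a(y_0)A_s)=s^{-n}$ come out correctly, i.e. that $a^{ij}(y_0)\partial_{ij}$ is the correct limiting operator. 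Everything else — the comparison $u_r\ge u_s$, the gradient matching at $y_0$, and the regular-point case — is routine.
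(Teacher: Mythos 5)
Your argument is essentially correct but follows a genuinely different route from the paper's. The paper's proof never classifies blow-ups: instead it sets $v := w_s - w_r$ and uses the \emph{directional derivative} $D_e w_r$ as a barrier. After proving Lemma~\ref{Lemma:L(Dewr)} (which is where the $C^{1,1}$ hypothesis is used — it makes $L(D_e w_r)$ a bounded function on $\Omega(w_r)\setminus B_\rho(x_0)$), the paper applies the weak maximum principle to conclude $v - \epsilon D_e w_r \ge 0$ on all of $\Omega(w_r)\setminus B_\rho$, and then derives a contradiction near $q$ by comparing the quadratic bound $\sup_{B_\delta(q)} v \le C_1\delta^2$ with the nondegeneracy bound $\sup_{B_\delta(q)}|\nabla w_r| \ge C_2\delta$ from the obstacle problem. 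You instead blow up both $w_r$ and $w_s$ at $y_0$ along a common sequence of scales, invoke Caffarelli's dichotomy (half-space versus homogeneous quadratic) for the variable-coefficient operator, and get the contradiction from a trace inequality on the blow-up limits. Each approach has its merits: yours is conceptually transparent and makes explicit the geometric mechanism (a nonnegative quadratic form cannot have negative $a(y_0)$-trace), but it leans on the blow-up classification for divergence-form operators with $C^{1,1}$ coefficients — valid but nontrivial machinery that you correctly flag as the main thing to check. The paper's barrier argument is softer: it needs only optimal regularity, nondegeneracy, and the maximum principle, and it uses $C^{1,1}$ of the $a^{ij}$ in a much more elementary way (so that differentiating the equation $Lw_r = r^{-n}$ gives an $L^\infty$ right-hand side). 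The paper's approach also generalizes more directly to the manifold setting of Theorem~\ref{CCMVT2}, where global directions $e$ must be replaced by local coordinates; a blow-up argument would work there too but would require re-establishing the classification in that setting. Both arguments, at heart, exploit the same tension between the quadratic decay of $w_s - w_r$ and the linear growth of $\nabla w_r$ at the shared free boundary point.
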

\noindent
Now within \cite{BBL} part of the main result states the following:
\begin{theorem}[Mean Value Theorem on Riemannian manifolds]  \label{RiemMVT}
Given a point $x_0$ in a complete Riemannian manifold
$\mathcal{M}$ (possibly with boundary), there exists a maximal number $r_0 > 0$ (which is
finite if $\mathcal{M}$ is compact) and a family of open sets $\{ D_r(x_0) \}$ for $0 < r < r_0,$
such that
\begin{itemize}
   \item[(A)] $0 < r < s < r_0$ implies, $D_r(x_0) \subset D_s(x_0),$ and 
   \item[(B)] $\lim_{r \downarrow 0} \max\dist_{x_0}(\partial D_r(x_0)) = 0,$ and
   \item[(C)] if $u$ is a subsolution of the Laplace-Beltrami equation, then
$$u(x_0) = \lim_{r \downarrow 0} \frac{1}{|D_r(x_0)|} \int_{D_r(x_0)} u(x) \; dx \;,$$
and $0 < r < s < r_0$ implies
$$\frac{1}{|D_r(x_0)|} \int_{D_r(x_0)} u(x) \; dx \leq \frac{1}{|D_s(x_0)|} \int_{D_s(x_0)} u(x) \; dx \;.$$
\end{itemize}
Finally, if $r < r_0,$ then the set $D_r(x_0)$ is uniquely determined as the noncontact set of
any one of a family of obstacle problems.  In fact, as long as the set $S \subset \mathcal{M}$ is ``big enough,''
then $D_R(x_0)$ is the noncontact set of the following obstacle problem:
\BVPc{\Delta_g u = - \bigrchi_{\{u < G\}} R^{-n}}{u = G(\cdot,x_0)}{KeyObProbMani}{S}
where $G$ is the Green's function for the Laplace-Beltrami operator on $S.$
\end{theorem}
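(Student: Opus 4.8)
The plan is to construct each $D_R(x_0)$ as the noncontact set of an obstacle problem with upper obstacle $G(\cdot,x_0)$ on a large domain, to transfer the optimal regularity of the obstacle solution to the Riemannian setting by localizing in normal coordinates, and then to deduce (A)--(C) and the uniqueness statement from a single integration by parts together with a comparison argument. Fix a relatively compact domain $S\subset\mathcal M$ with $x_0\in S$, ``big enough'' in the sense that $S\supset\overline{B_{CR}(x_0)}$ for the universal constant $C=C(n,\mathcal M)$ produced below; since a compact boundaryless $\mathcal M$ (or a parabolic complete one) has no honest Green's function while every geodesic ball does, taking $S$ proper is essential. Let $G(\cdot,x_0)$ be the Dirichlet Green's function of $\Delta_g$ on $S$, so $\Delta_g G(\cdot,x_0)=-\delta_{x_0}$ in $S$, $G(\cdot,x_0)=0$ on $\partial S$, with the standard local singularity. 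Following the variational construction of \cite{BH1}, carried out on manifolds in \cite{BBL} --- the one delicate point being the singularity of $G(\cdot,x_0)$ at $x_0$, handled there as usual --- one obtains a unique solution $u_R\le G(\cdot,x_0)$ of \eqref{eq:KeyObProbMani}; set $W_R:=G(\cdot,x_0)-u_R\ge 0$ and $D_R(x_0):=\{u_R<G(\cdot,x_0)\}=\{W_R>0\}$. Since $\Delta_g u_R\in L^\infty$, $u_R$ is bounded, so near $x_0$ one has $W_R\sim G(\cdot,x_0)\to+\infty$ and $\Delta_g W_R=R^{-n}-\delta_{x_0}$, while elsewhere $W_R$ solves the textbook obstacle problem $\Delta_g W_R=R^{-n}$ on $\{W_R>0\}$, $W_R\ge 0$. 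Because $\Delta_g$ has smooth coefficients in a normal chart, the classical regularity theory for the obstacle problem (see \cite{C1}) applies: $W_R\in C^{1,1}$ near $\partial D_R(x_0)$ with $W_R=0$ and $\nabla W_R=0$ on $\partial D_R(x_0)$, the nondegeneracy $\sup_{B_\rho(y)}W_R\ge c\rho^2$ holds for $y\in\overline{D_R(x_0)}$ and small $\rho$, and $B_{cR}(x_0)\subset D_R(x_0)\subset B_{CR}(x_0)$ with $c,C$ universal --- exactly as in the Euclidean case, Theorem \ref{BCHMVT}(1); in particular $x_0\in\inter D_R(x_0)$ and no free boundary lies near $x_0$.

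Since $W_R$ and $\nabla W_R$ vanish on $\partial D_R(x_0)$ (optimal obstacle-problem regularity), Green's second identity on $D_R(x_0)$ carries no boundary term, and since $\Delta_g W_R = R^{-n}-\delta_{x_0}$ as a distribution on $D_R(x_0)$, we get, for smooth $v$,
\[
 R^{-n}\int_{D_R(x_0)} v\, d\vol_g - v(x_0)\;=\;\int_{D_R(x_0)} W_R\,\Delta_g v\, d\vol_g .
\]
Taking $v\equiv1$ forces the automatic normalization $R^{-n}\,|D_R(x_0)|=1$, so for general $v$ this becomes
\[
 \frac{1}{|D_R(x_0)|}\int_{D_R(x_0)} v\, d\vol_g - v(x_0)\;=\;\int_{D_R(x_0)} W_R\,\Delta_g v\, d\vol_g\;\ge\;0
\]
whenever $\Delta_g v\ge 0$, since $W_R\ge 0$; a mollification argument extends this to arbitrary subsolutions of $\Delta_g$, which is the first inequality in (C).

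For $0<r<s$, a comparison argument for the obstacle problem --- using the minima and maxima of $u_r,u_s$ as competitors in the respective variational problems --- gives $u_r\ge u_s$ (larger forcing, larger solution), hence $W_r\le W_s$ and $D_r(x_0)\subset D_s(x_0)$, which is (A); then, with the convention $W_t\equiv0$ off $D_t(x_0)$, the first displayed identity applied at $R=r$ and $R=s$ yields $|D_s(x_0)|^{-1}\int_{D_s(x_0)} v - |D_r(x_0)|^{-1}\int_{D_r(x_0)} v = \int_{\mathcal M}(W_s-W_r)\,\Delta_g v\, d\vol_g\ge 0$ for subsolutions, the monotonicity in (C). Property (B) is immediate from $D_r(x_0)\subset B_{Cr}(x_0)$, as $\max\dist_{x_0}(\partial D_r(x_0))\le Cr\to0$. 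Finally, $u_r$ is $\Delta_g$-superharmonic on $D_r(x_0)$ with positive boundary values ($u_r=G(\cdot,x_0)>0$ on $\partial D_r(x_0)$, which lies well inside $S$), so the maximum principle gives $u_r>0$ on $D_r(x_0)$, hence $0\le W_r\le G(\cdot,x_0)$ and thus $0\le\int_{D_r(x_0)} W_r\,\Delta_g u\le\|\Delta_g u\|_{L^\infty(B_{Cr}(x_0))}\int_{D_r(x_0)} G(\cdot,x_0)\to 0$ as $|D_r(x_0)|=r^n\to0$ (using $G(\cdot,x_0)\in L^1_{loc}$); combined with the identity this gives $u(x_0)=\lim_{r\downarrow0}|D_r(x_0)|^{-1}\int_{D_r(x_0)} u$, finishing (C). The maximal radius is $r_0=\sup\{r:\overline{B_{Cr}(x_0)}\subset\mathcal M\}$, finite when $\mathcal M$ is compact because $|D_r(x_0)|=r^n\le\vol(\mathcal M)$. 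And $D_R(x_0)$ does not depend on the ``big enough'' $S$: if $S_1\subset S_2$ both contain $\overline{B_{CR}(x_0)}$, the two Green's functions differ by a $\Delta_g$-harmonic function that does not alter the noncontact set once $D_R(x_0)\subset\subset S_1$, so uniqueness of the minimizer forces the two constructions to coincide --- the last assertion of the theorem.

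The integration by parts and the comparison step are soft; the real work is concentrated in the first paragraph --- producing $G(\cdot,x_0)$ with the correct local singular structure on a manifold, and, more substantively, transferring the \emph{optimal} $C^{1,1}$ regularity and the nondegeneracy of $W_R$, uniformly near the free boundary, from the Euclidean obstacle problem to the variable-coefficient operator one meets in normal coordinates. The device that keeps this routine rather than novel is passing from $u_R$ to $W_R=G(\cdot,x_0)-u_R$, which away from $x_0$ removes the Green's-function singularity and leaves the standard obstacle problem $\Delta_g W_R=R^{-n}\bigrchi_{\{W_R>0\}}$ with bounded positive right-hand side, to which the classical free-boundary theory applies verbatim after localization.
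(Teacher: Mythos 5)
This statement is not proved in the paper: it is imported verbatim from~\cite{BBL} (``within \cite{BBL} part of the main result states the following''), so there is no local proof to compare against. Your sketch does faithfully follow the construction of~\cite{BH1} as carried onto manifolds in~\cite{BBL}: pass to $W_R=G(\cdot,x_0)-u_R$; use $W_R=\nabla W_R=0$ on $\partial D_R(x_0)$ (from $C^{1,1}$ obstacle-problem regularity) to kill the boundary terms in Green's second identity; plug in $v\equiv1$ to read off the normalization $|D_R(x_0)|=R^n$; and obtain the sub-mean-value inequality and its monotonicity in~(C), together with~(A) and~(B), from $W_R\ge0$, $W_s\ge W_r$, and the shrinking of $D_r(x_0)$.

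Two points deserve care. First, you invoke $B_{cR}(x_0)\subset D_R(x_0)\subset B_{CR}(x_0)$ with universal $c,C$ ``exactly as in the Euclidean case,'' but on a manifold this is a small-$R$ statement only: on a compact $\mathcal M$ it cannot persist as $R\to r_0$ once $D_R(x_0)$ overflows every normal chart, which is exactly why the Riemannian theorem asserts only the asymptotic property~(B) and not the two-sided containment of Theorem~\ref{BCHMVT}(1). Your uses of the containment are all at small $R$, so they survive, but $r_0$ should not be \emph{defined} through $B_{Cr}$. Second, the Green's identity and comparison steps you write out are the soft part; the real content of~\cite{BBL} is in what you compress into a single sentence --- existence and singular asymptotics of $G(\cdot,x_0)$ on a proper $S\subset\mathcal M$, well-posedness of the variational inequality with an obstacle unbounded at $x_0$, and the uniform $C^{1,1}$ and nondegeneracy estimates for the obstacle problem with a variable-coefficient $\Delta_g$. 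Those are precisely the results being referenced, not proved, in your opening paragraph, and they are where the difficulty of the manifold extension actually lies.
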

\begin{remark}[A Few More Details]   \label{AFD}
A simple criteria to determine if $S$ is ``big enough'' in the theorem above is to see if
$\{u < G \} \subset \subset S.$  Assuming that it is, then the resulting noncontact set is indeed a mean value
set.  A simple exercise shows that if $S$ is big enough and $S \subset S^{\prime} \subset \mathcal{M}$ then
solving Equation\refeqn{KeyObProbMani}with $S$ replaced by $S^{\prime}$ leads to the exact same noncontact
set.  (The new Green's function and the new solution change by the exact same harmonic function.)  In fact, it
will frequently be more convenient to work with the height function, and so we define
$w_r(x) := G(x_0,x) - u_r(x)$ which obeys either:
\begin{equation}
    L w_r = \bigrchi_{\{w > 0 \}} r^{-n} - \delta_{x_0}
\label{eq:wLeqn}
\end{equation}
or
\begin{equation}
    \Delta w_r = \bigrchi_{\{w > 0 \}} r^{-n} - \delta_{x_0}
\label{eq:wLBeqn}
\end{equation}
according to which case we are currently studying.  (We use $\delta_{x_0}$ to denote the usual delta
function at $x_0.$)
\end{remark}

In the current work we can show the following:
\begin{theorem}[Compact Containment of Mean Value Sets, Part II]   \label{CCMVT2}
Under the assumptions of Theorem\refthm{RiemMVT}the family $\{D_{r}(x_0)\}$ is always \textit{strictly}
increasing in the sense that 
\begin{itemize}
   \item[(A$^{\prime}$)] $0 < r < s < r_0$ implies $\qhclosure{D_{r}(x_0)} \; \subset D_{r}(x_0).$
\end{itemize}
\end{theorem}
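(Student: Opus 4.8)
The plan is to reduce Theorem~\ref{CCMVT2} to the Euclidean $C^{1,1}$ situation of Theorem~\ref{CCMVT1} by a localization, and then to run the comparison/barrier scheme announced in the introduction. Suppose, for contradiction, that $0 < r < s < r_0$ and that there is a point $y_0 \in \partial D_r(x_0) \cap \partial D_s(x_0)$. By item (A) of Theorem~\ref{RiemMVT} we have $D_r(x_0) \subseteq D_s(x_0)$, so $y_0$ lies on the boundary of each set, and by item (B) together with the inner--ball bound inherited from Theorem~\ref{BCHMVT}, the pole $x_0$ sits at a fixed positive distance from $y_0$. First I would pass to geodesic normal coordinates centered at $y_0$ on a small ball $B = B_R(y_0)$ chosen so small that $x_0 \notin B$ and that the coordinate chart is a bi-Lipschitz diffeomorphism with controlled constants. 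In these coordinates $\Delta_g$ becomes a divergence-form operator $\partial_i(a^{ij}\partial_j)$ whose coefficients are as smooth as the metric --- in particular $C^{1,1}$ --- are uniformly elliptic on $B$, and satisfy $a^{ij}(y_0)=\delta^{ij}$; moreover equation~\eqref{eq:wLBeqn} restricts on $B$ to the genuine obstacle-type equations $\partial_i(a^{ij}\partial_j w_r) = \bigrchi_{\{w_r>0\}}\, r^{-n}\sqrt{\det g}$ and likewise for $w_s$, the delta function having disappeared since $x_0\notin B$ and the right-hand side being a function bounded and bounded away from zero. Hence the classical $C^{1,1}$-regularity and nondegeneracy estimates for the obstacle problem apply to $w_r$ and $w_s$ on $B$, exactly as in the hypotheses behind Theorem~\ref{CCMVT1}.

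With this reduction the argument proceeds as outlined in the introduction. A comparison computation on $D_r(x_0)\cap B$ --- where $\partial_i(a^{ij}\partial_j)(w_s-w_r) = (s^{-n}-r^{-n})\sqrt{\det g} < 0$ while $w_s-w_r \ge 0$ on the boundary --- shows by the minimum principle that $h := w_s - w_r \ge 0$ throughout $B$, and $h>0$ inside $D_r(x_0)\cap B$ by the strong minimum principle; this is the ``related function we can show is nonnegative.'' Since $w_r,w_s\in C^{1,1}(B)$ attain their minimum value $0$ at the interior point $y_0$, we get $\nabla w_r(y_0)=\nabla w_s(y_0)=0$, hence $\nabla h(y_0)=0$ and the quadratic bound $0\le h(x)\le C|x-y_0|^2$ on $B$ with $C$ controlled by $r^{-n}$. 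A first attempt at the contradiction compares the nondegeneracy of $w_r$ at $y_0$ (the growth constant scaling like $r^{-n}$) against this $C^{1,1}$ cap on $h$ coming from the regularity of $w_s$ (constant scaling like $s^{-n}<r^{-n}$): at a point where $w_r$ realizes its nondegenerate growth in $B_\rho(y_0)$ one has $w_s\ge w_r$, which already closes the gap when $s/r$ is sufficiently large, but the ratio of the two constants prevents this crude comparison from working for $s$ close to $r$. The repair is to use a directional derivative $\partial_e w_r$ of the obstacle solution as a barrier: such a derivative solves a homogeneous elliptic equation inside $D_r(x_0)$ (this is where the $C^{1,1}$ regularity of the coefficients, automatic here from the smoothness of $g$, is used) and vanishes on $\partial D_r(x_0)$; comparing an appropriate multiple of it against $h$ on a subregion of $D_r(x_0)$ abutting $y_0$, one transfers the nondegenerate lower growth of $w_r$ into a statement that forces $h$ to be negative somewhere, contradicting $h\ge 0$. (At a regular free boundary point this degenerates into the Hopf-lemma argument of the introduction, where the barrier region is an honest interior ball.)

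I expect the singular-point case --- where no ball, and no pseudoball of the type in \cite{ABMMZ}, fits inside $D_r(x_0)$ at $y_0$ --- to be the main obstacle. The delicate points are: choosing the direction $e$ (equivalently, the barrier assembled from $\nabla w_r$) and the subregion of $D_r(x_0)$ adapted to the blow-up of $w_r$ at $y_0$ so that the barrier remains usable right up to $y_0$; tracking the $r$- versus $s$-dependence of the nondegeneracy and $C^{1,1}$ constants carefully enough that the final inequality is strict for \emph{every} $s>r$; and, on the Riemannian side, absorbing the $O(|x-y_0|^2)$ discrepancies between $a^{ij}$ and $\delta^{ij}$ and between $\sqrt{\det g}$ and $1$, which enter both the comparison computation and the barrier estimate and must be controlled by shrinking $R$. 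Once the Euclidean $C^{1,1}$ statement (Theorem~\ref{CCMVT1}) is in hand by this scheme, Theorem~\ref{CCMVT2} follows from the localization above, and the conclusion $\qhclosure{D_r(x_0)}\subseteq D_s(x_0)$ for all $0<r<s<r_0$ is obtained.
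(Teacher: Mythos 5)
There is a genuine gap. You claim the theorem reduces to Theorem~\ref{CCMVT1} after passing to normal coordinates around the putative shared point $y_0 \in \partial D_r(x_0) \cap \partial D_s(x_0)$. But that reduction does not go through, and the paper explicitly flags why: in the Euclidean $C^{1,1}$ proof of Theorem~\ref{CCMVT1} the comparison is run on $\Omega(w_r) \setminus B_{\rho}(x_0)$, whose boundary splits into $\partial\Omega(w_r)$ (where $D_e w_r \equiv 0$, so $v - \epsilon D_e w_r = w_s \geq 0$ for free) and the inner sphere $\partial B_{\rho}(x_0)$, which is compactly interior to $\Omega(w_r)$, so $v = w_s - w_r$ is bounded below there by a fixed positive constant. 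That decomposition is global in $\R^n$ and uses the fact that $e$ is a single constant direction. On a manifold no global direction exists, and $D_r(x_0)$ need not fit in a chart, so one is forced to localize to a ball $B_{\rho}(q)$ around the contact point $q$ itself --- not around $x_0$. The resulting outer boundary piece $\partial B_{\rho}(q) \cap \Omega(w_r)$ is not compactly interior: it limits onto $\partial\Omega(w_r)$, and near that corner $w_s - w_r$ vanishes like $\mathrm{dist}(\cdot,\partial\Omega(w_r))^2$ while the barrier $\mu D_e w_r$ vanishes only like $\mathrm{dist}(\cdot,\partial\Omega(w_r))$. So the function $h := w_s - w_r - \mu D_e w_r$ can actually be \emph{negative} on part of $\partial B_{\rho}(q)\cap\Omega(w_r)$ no matter how small $\mu$ is, and the weak maximum principle cannot be invoked directly in $B_{\rho}(q)\cap\Omega(w_r)$ the way it is invoked in $\Omega(w_r)\setminus B_{\rho}(x_0)$.

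This is precisely the new difficulty the paper has to overcome, and it is not a matter of "absorbing $O(|x-y_0|^2)$ discrepancies" from the metric as your proposal suggests. The paper's argument splits $\partial B_{\rho}(q)\cap\Omega(w_r)$ into a layer $\eta_{\gamma}(\partial\Omega(w_r))$ near the free boundary, where one can only prove $h \geq -C\mu\gamma$ (Equation~\eqref{eq:NearBO}), and its complement, where a quantitative barrier bound $w_s - w_r \gtrsim \gamma^2(r^{-n}-s^{-n})$ is obtained via Lemma~\ref{BFE} (Equation~\eqref{eq:AwayBO}). It then runs a Caffarelli-type argument in the spirit of \cite[Lemma 11]{C2}: assuming $h(x_1)<0$ for some $x_1$ near $q$, add the corrector $\delta\bigl(\frac{r^{-n}}{4n}|x-x_1|^2 - w_r\bigr)$, verify $L v \leq -\alpha <0$ so the negative minimum is attained on the boundary, and choose the scales in the order $\gamma \ll \rho$, $\delta \ll \gamma^2$, then $\mu$ small, to force nonnegativity on both boundary pieces $S_1, S_2$ (Equations~\eqref{eq:vNBO}--\eqref{eq:vABO}). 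Only then does the nondegeneracy/optimal-regularity dichotomy at $q$ produce a contradiction. Your proposal correctly identifies the end-game (quadratic decay of $w_s-w_r$ versus linear nondegenerate growth of $D_e w_r$ at $q$) but skips the step that makes the minimum-principle scheme applicable in a ball around $q$ in the first place; without that step, the proof does not close.
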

\noindent
In both cases, we can say that given a point $y_0$ contained in a mean value set around $x_0,$ there is a unique $r$
such that $y_0 \in \partial D_r(x_0).$


\subsection*{Acknowledgements} The authors thank Michael Hill, Rustam Sadykov, and especially
Luis Silvestre for helpful discussions and valuable input.  

\section{Notation, Conventions, and a Preliminary Lemma}  \label{NotConv}

We will use the following basic notation and assumptions throughout the paper:
$$
\begin{array}{lll}
\mathcal{M} & \ & \text{a smooth connected Riemannian n-manifold} \\
g & \ & \text{the metric for our ambient manifold} \ \mathcal{M} \\
\bigrchi_D & \ & \text{the characteristic function of the set} \ D \\
\closure{D} & \ & \text{the closure of the set} \ D \\
\text{int}(D) & \ & \text{the interior of the set} \ D \\
\partial D & \ & \text{the boundary of the set} \ D \\
\Omega(w) & \ & \{x:  w(x) > 0 \} \\
\Lambda(w) & \ & \{x: w(x) = 0 \} \\
\text{FB}(w) & \ & \partial \Omega(w) \cap \partial \Lambda(w) \\
\text{Sing}(u)  & \ &\{x \in \text{FB}(u) \, | \, x \text{ is a singular point}\} \\
\text{Reg}(u)  & \ &\{x \in \text{FB}(u) \, | \, x \text{ is a regular point}\} \\
B_{r}(p) & \ & \{ x \in \mathcal{M} : \dist_p(x) < r \} \\
\eta_{\delta}(S) & \ & \text{the} \ \delta\text{-neighborhood of the set} \ S \\
D_r(p) & \ & \text{the Mean Value set for the point} \ p \ \text{with ``radius''} \ r \\
\Delta_g & \ & \text{the Laplace-Beltrami operator on} \ \mathcal{M}. \\
\end{array}
$$
In particular since $D_r(x_0) = \Omega(w_r)$ and since we will be using $D_e w_r$ to denote the derivative
of $w_r$ in the direction $e,$ it will often be less confusing to refer to $\Omega(w_r)$ instead of $D_r(x_0).$

Throughout the paper we assume that $a^{ij}(x)$ are bounded, symmetric, and uniformly elliptic,
and we define the divergence form elliptic operator
\begin{equation}
     L := D_j \, a^{ij}(x) D_i \;,
\label{eq:Ldef}
\end{equation}
or, in other words, for a function $u \in W^{1,2}(\Omega)$ and $f \in L^2(\Omega)$ we say ``$Lu = f$ in $\Omega$'' if
for any $\phi \in W_{0}^{1,2}(\Omega)$ we have:
\begin{equation}
    - \int_{\Omega} a^{ij}(x) D_{i} u D_{j} \phi = \int_{\Omega} f \phi \;.
\label{eq:Ldef2}
\end{equation}
(Notice that with our sign conventions we can have $L = \Delta$ but not $L = -\Delta.$)
With our operator $L$ we let $G(x,y)$ denote the Green's function for all of $\R^n$ and observe that the
existence of $G$ at least on bounded sets and in $\R^n$ when $n > 3$ is guaranteed by the work of Littman,
Stampacchia, and Weinberger.  (See \cite{LSW}.)  For definitions of Sobolev spaces and for standard theorems
about uniformly elliptic divergence form operators on $\R^n$ we refer the reader to the excellent text by
Gilbarg and Trudinger and for the situation on a Riemannian manifold we mainly use volume one of the sequence
of PDE texts by Michael Taylor.  (See \cite{GT} and \cite{MET}.)

Finally, there is a simple lemma in \cite{B} that we will use repeatedly, so we record it here for the reader's convenience:
\begin{lemma}[Theorem 2.7c of \cite{B}]  \label{B27c}
Suppose that for $i = 1,2,$ the functions $w_i \geq 0$ solve the obstacle problem:
    \BVPnA{\Delta u}{\bigrchi_{ \{ w > 0 \} } g}{u}{\psi_i}{KeyObProb27c}{B_1}
where $0 < \mu_1 \leq g \leq \mu_2,$ and $\psi_1 \leq \psi_2 \leq \psi_1 + \epsilon,$ then
$$w_1 \leq w_2 \leq w_1 + \epsilon$$
and in particular
$$||w_1 - w_2||_{L^{\infty}(B_1)} \leq \epsilon \;.$$
\end{lemma}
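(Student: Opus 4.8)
The statement is a comparison (ordering) principle for the zero-obstacle problem, and the plan is to deduce the two inequalities $w_1 \le w_2$ and $w_2 \le w_1 + \epsilon$ from one and the same device: apply the maximum principle only on the open set where the claimed inequality would fail, and observe that on that set the larger of the two functions is strictly positive, so the governing equation holds there with equality.

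First, for $w_1 \le w_2$, let $U := \{w_1 > w_2\}$, which is open since $w_1,w_2$ are continuous. On $U$ we have $w_1 > w_2 \ge 0$, hence $w_1 > 0$ there, so $\Delta w_1 = g$ on $U$; meanwhile $\Delta w_2 = \bigrchi_{\{w_2>0\}}\, g \le g$ throughout $B_1$ because $g > 0$. Thus $\Delta(w_1 - w_2) \ge 0$ on $U$, i.e. $w_1 - w_2$ is subharmonic there, while on $\partial U$ one has $w_1 - w_2 \le 0$: the portion of $\partial U$ interior to $B_1$ lies in $\{w_1 = w_2\}$ by continuity, and on $\partial U \cap \partial B_1$ we get $w_1 - w_2 = \psi_1 - \psi_2 \le 0$ by hypothesis. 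The maximum principle then forces $w_1 - w_2 \le 0$ on $U$, contradicting the definition of $U$ unless $U = \emptyset$; hence $w_1 \le w_2$ in $B_1$.

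Next, for $w_2 \le w_1 + \epsilon$, set $\w := w_1 + \epsilon$, so that $\Delta \w = \Delta w_1 \le g$ in $B_1$ and $\w = \psi_1 + \epsilon \ge \psi_2$ on $\partial B_1$, and let $V := \{w_2 > \w\}$. On $V$ we have $w_2 > \w \ge \epsilon > 0$, so $\Delta w_2 = g \ge \Delta \w$ there and $w_2 - \w$ is subharmonic on $V$; and $w_2 - \w \le 0$ on $\partial V$ exactly as before (it vanishes on the interior portion of $\partial V$ and equals $\psi_2 - \psi_1 - \epsilon \le 0$ on $\partial V \cap \partial B_1$). The maximum principle again gives $V = \emptyset$, i.e. $w_2 \le w_1 + \epsilon$. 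Combining the two bounds yields $0 \le w_2 - w_1 \le \epsilon$, hence $||w_1 - w_2||_{L^{\infty}(B_1)} \le \epsilon$.

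The only inputs beyond the maximum principle itself are the pointwise identity $\Delta w_i = g$ on $\{w_i > 0\}$ and the global bound $\Delta w_i \le g$ in $B_1$; both are immediate from the equation $\Delta w_i = \bigrchi_{\{w_i>0\}}\, g$ satisfied by solutions of the obstacle problem, read off via their standard $C^{1,1}_{\mathrm{loc}}$ regularity, which also makes the weak maximum principle available for the $W^{2,n}_{\mathrm{loc}}$ subsolutions $w_1 - w_2$ and $w_2 - \w$. I do not expect a real obstacle here; the one point that wants a little care is the bookkeeping on $\partial U$ and $\partial V$ — splitting each into the portion interior to $B_1$, where the relevant difference vanishes by continuity, and the portion on $\partial B_1$, where it is controlled by the ordering of the boundary data $\psi_1 \le \psi_2 \le \psi_1 + \epsilon$.
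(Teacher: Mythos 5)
Your proof is correct, and since the paper only cites this result from \cite{B} without reproducing an argument, there is nothing here to compare it against line-by-line. Your approach is the standard device for such comparison principles in the obstacle problem: restrict attention to the open set where the claimed ordering fails, observe that there the \emph{larger} function is strictly positive so that its governing equation becomes $\Delta(\cdot) = g$ with equality, pair that with the global one-sided bound $\Delta w_i \le g$ for the other function, and invoke the weak maximum principle for the resulting subharmonic difference, whose boundary values are nonpositive both on the interior portion of the boundary (by continuity) and on $\partial B_1$ (by the ordering of the $\psi_i$). The two halves, $w_1 \le w_2$ and $w_2 \le w_1 + \epsilon$, fall out of the identical mechanism after translating by $\epsilon$, exactly as you observe. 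The only hypotheses you actually use are $g > 0$ (so that $\chi_{\{w_i > 0\}}\,g \le g$), the ordering of the boundary data, and $C^{1,1}_{\mathrm{loc}}$ regularity of the solutions (to make the pointwise identities and the maximum principle legitimate); the upper ellipticity bound $g \le \mu_2$ is not needed, and the lower bound $g \ge \mu_1 > 0$ is used only through $g > 0$. This matches the intent of the cited result and is a complete proof.
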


\section{Proof of Compact Containment of Mean Value Sets, Part I}  \label{FirstApp}

We assume that $L := \partial_i (a^{ij}(x) \partial_j),$ that $||a^{ij}||_{C^{1,1}} < \infty,$ and as above
we let $u_r$ denote the solution to
\BVPnA{L(u)}{- \bigrchi_{\{u < G\}} r^{-n}}{u}{G(\cdot,x_0)}{KeyObProb2}{B_M(x_0)}
and let $w_r(x) := G(x, x_0) - u_r(x).$

\begin{lemma} \label{Lemma:L(Dewr)}
$L(D_e w_r)$ is a function such that,
$$|L(D_e w_r)| \leq C(\rho) < \infty \text{ in } \Omega(w_r) \setminus B_{\rho}(x_0)$$
for any direction $e$ and $\rho>0$ so that $\qhclosure{B_{\rho}(x_0)} \; \subset \Omega(w_r).$
\end{lemma}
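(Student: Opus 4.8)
\medskip
\noindent\emph{Proof strategy.} The plan is to differentiate the equation for $w_r$ in the direction $e$ and to verify that the right-hand side of the resulting identity is a genuine $L^\infty$ function once one stays away from $x_0$, where $G(\cdot,x_0)$---and hence $w_r$---is singular. Fix a direction $e$ and $\rho>0$ with $\overline{B_\rho(x_0)}\subset\Omega(w_r)$, and set $U:=\Omega(w_r)\setminus\overline{B_{\rho/2}(x_0)}$. Then $U$ is an open subset of $\Omega(w_r)$ with $x_0\notin U$ and $w_r>0$ on $U$; moreover $U\supseteq\Omega(w_r)\setminus B_\rho(x_0)$ and $\dist_{x_0}(x)>\rho/2$ for every $x\in U$. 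By Remark \ref{AFD} (equation \eqref{eq:wLeqn}), $L w_r=\bigrchi_{\{w_r>0\}}r^{-n}-\delta_{x_0}$, so $L w_r\equiv r^{-n}$ weakly on $U$, and in particular $D_e(L w_r)=0$ there.

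Next I would carry out the differentiation. Writing $L=D_j\,a^{ij}(x)\,D_i$, the Leibniz rule gives, formally,
\[
   D_e(L w_r)=L(D_e w_r)+D_j\!\big((D_e a^{ij})\,D_i w_r\big),
\]
so that on $U$
\[
   L(D_e w_r)=-D_j\!\big((D_e a^{ij})\,D_i w_r\big)=-(D_j D_e a^{ij})(D_i w_r)-(D_e a^{ij})(D_j D_i w_r).
\]
The first equality I would make rigorous by testing the weak form of $L w_r=r^{-n}$ against $D_e\phi$ for $\phi\in C_c^\infty(U)$ (still in $C_c^\infty(U)$ since $e$ is a constant direction), commuting $D_e$ past $D_i$, integrating by parts once in the $e$-direction, and using that $r^{-n}$ is constant so $\int_U r^{-n}D_e\phi=0$; this yields $L(D_e w_r)=-D_j\big((D_e a^{ij})D_i w_r\big)$ in $\mathcal D'(U)$. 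The second equality is then the product rule for the weak derivative of a product of two Lipschitz functions. Here $D_e a^{ij}$ is Lipschitz because $\|a^{ij}\|_{C^{1,1}}<\infty$, and $D_i w_r$ is Lipschitz by the optimal $C^{1,1}$ interior regularity for the obstacle problem: a solution of the problem \eqref{eq:KeyObProb2} defining $w_r$ is $C^{1,1}_{\mathrm{loc}}$ on all of $B_M(x_0)$ away from the singularity of $G$ at $x_0$ (on $\Lambda(w_r)$ it vanishes identically and on $\Omega(w_r)$ the Schauder estimates for $L w_r=r^{-n}$ give $C^{2,\alpha}$), so on $U$ we have $D_i w_r\in C^{0,1}(U)$ and $D_j D_i w_r\in L^\infty(U)$.

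It remains to estimate the two terms on $U\supseteq\Omega(w_r)\setminus B_\rho(x_0)$. The coefficient factors obey $\|D_e a^{ij}\|_{L^\infty}+\|D_j D_e a^{ij}\|_{L^\infty}\le C_0\|a^{ij}\|_{C^{1,1}}$, and the $C^{1,1}$ estimate for $w_r$ on the compact set $\overline{U}\subset B_M(x_0)\setminus\{x_0\}$ gives $\|D_i w_r\|_{L^\infty(U)}+\|D_j D_i w_r\|_{L^\infty(U)}\le C(\rho)$, where $C(\rho)$ depends on $\rho$ together with the fixed quantities $n$, $\lambda$, $\Lambda$, $\|a^{ij}\|_{C^{1,1}}$, $M$ and $r$. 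Multiplying these out gives $|L(D_e w_r)|\le C(\rho)$ a.e.\ in $\Omega(w_r)\setminus B_\rho(x_0)$, as asserted. The only real content here is the regularity input---that $w_r$ is $C^{1,1}$ up to the free boundary away from $x_0$, with a constant controlled by $\rho$ and the fixed data---together with the matching observation that $C^{1,1}$ coefficients is exactly enough smoothness to differentiate $a^{ij}$ once more and keep the product bounded; excising the singularity at $x_0$ and justifying the distributional manipulations are routine. I expect the step needing the most care to be pinning down this $C^{1,1}$ bound for $w_r$ with its explicit dependence on $\rho$.
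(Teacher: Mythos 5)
Your proof is correct and follows essentially the same route as the paper's: both differentiate the equation $Lw_r = r^{-n}$ (away from $x_0$) in a fixed direction $e$, justify the distributional manipulation by pairing the weak form against $D_e\phi$ and using that $r^{-n}$ is constant, and arrive at $L(D_e w_r) = -D_i\bigl((D_e a^{ij})\,D_j w_r\bigr)$, which is bounded because $a^{ij}\in C^{1,1}$ and $w_r$ has $C^{1,1}$ regularity once the singularity at $x_0$ is excised. You spell out the final boundedness step (expanding the divergence by the Lipschitz product rule and citing optimal obstacle-problem regularity) a bit more explicitly than the paper does, but this is elaboration rather than a genuinely different argument.
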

\begin{proof}
Define $E:=\Omega(w_r) \setminus B_{\rho}(x_0)$ and let $\phi \in C^{\infty}_0(E).$
\begin{alignat*}{1}
     -\int_{E} a^{ij}D_j (D_e w_r) D_i \phi \, dx
                &=  \int_E D_e(a^{ij} D_i \phi) D_j w_r \, dx \\
                &= \int_E D_i \phi (D_e a^{ij}) D_j w_r \, dx + \int_E (D_e D_i \phi) a^{ij} D_j w_r \, dx.
\end{alignat*}
On the other hand, since $D_e \phi \in C^{\infty}_0(E)$ is a permissible test function, the
second integral turns out to be zero:
\begin{alignat*}{1}
         \int_E (D_e D_i \phi) a^{ij} D_j w_r \, dx
                &= \int_E D_i (D_e \phi) a^{ij} D_j w_r \, dx \\
                &= - \int_E D_e \phi \, r^{-n} \, dx \\
                &= \int_E \phi \, D_e  r^{-n} \, dx \\
                &= 0 \;.
\end{alignat*}
Hence, we have $L(D_e w_r) = - D_i (D_e \, a^{ij} D_j w_r) \in L^{\infty}(E),$ with a uniform bound
since we have excised a ball around the singularity.
\end{proof}

\noindent
\textit{Proof of Theorem\refthm{CCMVT1}\!\!.}
From \cite{BH1} we know that $\Omega(w_r) \subset \Omega(w_s).$ Hence, we need only show that there does not
exist a point $q \in FB(w_r) \cap FB(w_s).$ In order to show this we will consider the function $v:=w_s - w_r$
which satisfies:
\begin{enumerate}
     \item $v \geq 0$ in $B_M$
     \item $v=w_s \geq 0$ on $\partial \Omega(w_r)$
     \item $Lv=s^{-n} - r^{-n}<0$ in $\Omega(w_r)$
     \item $v>0$ in $\Omega(w_r)$
\end{enumerate}

Assume that there exists a point $q \in FB(w_r) \cap FB(w_s).$ Consider the function $D_e w_r$ for some unit
vector $e$ to be chosen later. Lemma \ref{Lemma:L(Dewr)} ensures that in the set
$\Omega(w_r) \setminus B_{\rho}$, for small $\rho$, there exists $\epsilon_1>0$ such that
$$L(v - \epsilon_1 D_e w_r)<0 \text{ in } \Omega(w_r) \setminus B_{\rho}.$$
Also, note that, for $\rho$ small enough, $v > 0$ on $\partial B_{\rho}$ by \cite[Lemma 6.2]{BH1}, nondegeneracy,
and optimal regularity. Hence, there exists $\epsilon_2>0$ such that
$$v - \epsilon_2 D_e w_r \geq 0 \text{ on } \partial B_{\rho} \,.$$
Now, in fact, for any $\epsilon_2$ whatsoever, by standard regularity results for the obstacle problem
(see \cite{C1}, \cite{C2}, or \cite{B}) we automatically have
$$v - \epsilon_2 D_e w_r = v = w_s \geq 0 \text{ on } \partial \Omega(w_r) \,.$$
Then, by the Weak Maximum Principle 
$$v - \epsilon D_e w_r \geq 0 \text{ in } \Omega(w_r) \setminus B_{\rho}$$
for $\epsilon=min\{\epsilon_1,\epsilon_2\}.$ However, by optimal regularity and nondegeneracy we know that
$$\sup_{B_{\delta}(q)}v \leq C_1 \delta^2 \text { and } \sup_{B_{\delta}(q)}|\nabla w_r| \geq C_2 \delta$$
for $\delta>0$ such that $\overline{B_{\delta}(q)} \subset B_M.$ Therefore, for $\delta$ small enough, there
exists a point $y \in B_{\delta}(q)$ and a unit vector $e$ so that
$$v(y) - \epsilon D_e w_r(y) \leq C_1 \delta^2 - \epsilon C_2 \delta<0$$
which gives us a contradiction.
\qed

\section{Proof of Compact Containment of Mean Value Sets, Part II}  \label{SecondApp}

Now we turn to the proof of Theorem\refthm{CCMVT2}\!\!.  Before starting, however, it is worth
noting how the previous proof fails in this case.  Perhaps the greatest problem is the inability to
define a direction $e$ globally.  Accordingly, the set $D_r(x_0) \setminus B_{\rho}(x_0)$ which
could be huge (and therefore nowhere close to being contained within a chart of the manifold
$\mathcal{M}$) cannot be used for our argument.  We must work locally 
and so instead of working on $D_r(x_0) \setminus B_{\rho}(x_0),$ we work on
$D_r(x_0) \cap B_{\delta}(q)$ where $q \in \partial D_r(x_0) \cap \partial D_s(x_0).$  On this
new set, however, although we have no problem defining directions as long as $\delta$ is 
sufficiently small, we have a new problem of potentially having our test function being negative
on parts of the boundary.

The setting we have for this section assumes that we have a point
$q \in \partial D_r(x_0) \cap \partial D_s(x_0),$ and a $\delta > 0$ that is small enough so
that
\begin{enumerate}
      \item $B_{\delta}(q)$ is completely contained within a single chart $(\mathcal{U}, \varphi)$ of $\mathcal{M},$
      \item we let $y$ be points within the original manifold, and $x$ denote points in $\varphi(\mathcal{U})$ so that
$x = \varphi(y),$ and
      \item we assume that the $\varphi$ is giving us normal coordinates around $q$ and then 
the operator $\Delta_g$ can be expressed:
          \begin{equation}
              \begin{array}{rl}
              \displaystyle{\Delta_g u(y)} &= \rule[-4.5ex]{0pt}{7ex}
                      \displaystyle{\frac{1}{\sqrt{ |det \; g(x)| }} \cdot \frac{\partial}{\partial x_i} 
                                            \left( g^{ij}(x) \sqrt{ |det \; g(x)| } \; 
                                            \frac{\partial}{\partial x_j} u(x) \right)} \\
                                &=: \displaystyle{g^{ij}(x) \frac{\partial}{\partial x_i} \frac{\partial}{\partial x_j} u(x) 
                                  + b^{j}(x) \frac{\partial}{\partial x_j} u(x)} \\
                                &=: \displaystyle{Lu(x) \;,}
              \end{array}
          \label{eq:BigUgly}
          \end{equation}
with $g^{ij}(p) \rightarrow \delta^{ij},$ and $b^{i}(p) \rightarrow 0$ as $p \rightarrow q.$  (We are
using $\delta^{ij}$ to denote the Kronecker delta.)
\end{enumerate}
So the picture that we have on the manifold is given in Figure~\ref{fig:ManifoldPicture}.
\begin{figure}[!h]
	\centering
	\scalebox{.85}{\includegraphics{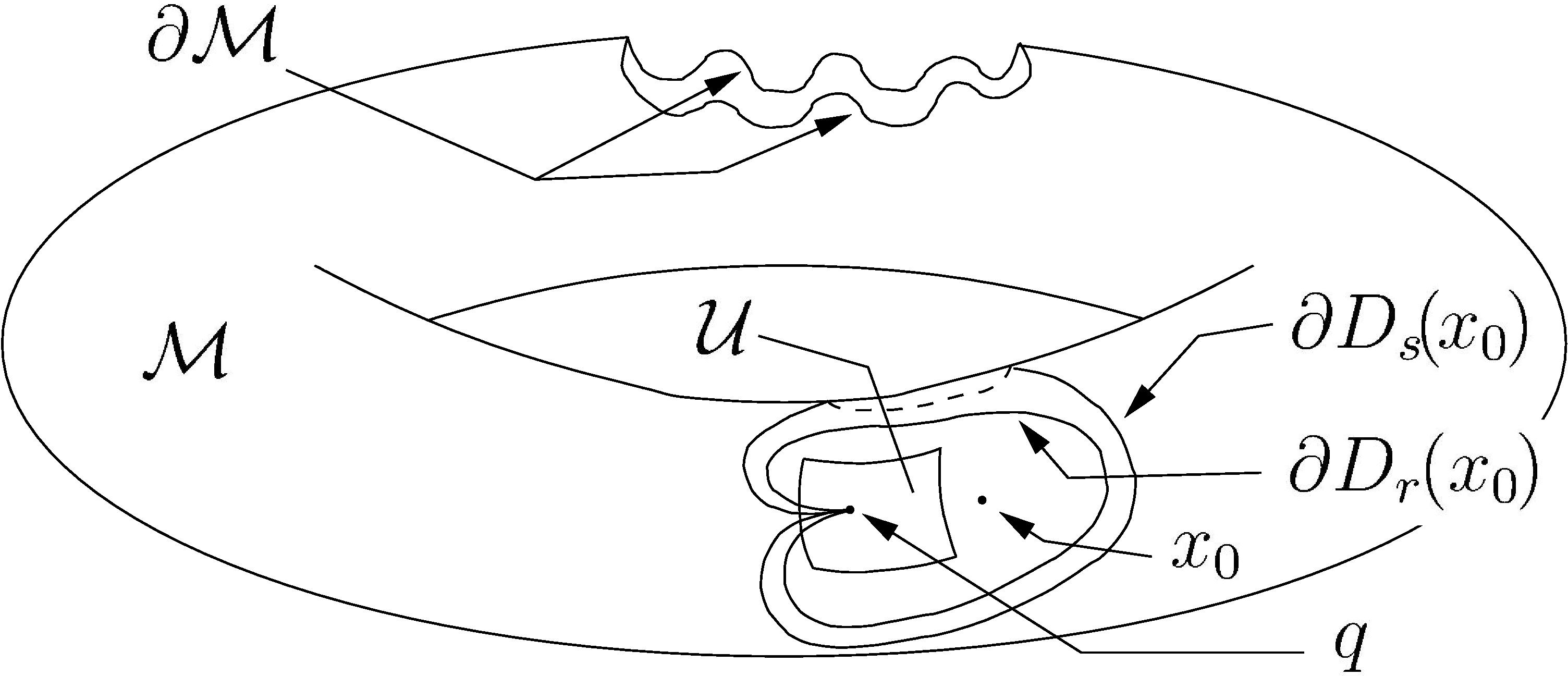}}
	\caption{The Picture on the Manifold $\mathcal{M}.$}
	\label{fig:ManifoldPicture}
\end{figure}
In terms of a source for the differential geometry facts and conventions that we needed and used, we
found the text \cite{Au} by Aubin to be useful for everything above.

\begin{remark}
An astute reader might complain that our mean value sets in Figure~\ref{fig:ManifoldPicture} lack the
reflection symmetry that would be enjoyed on a piece of a perfect torus, so our picture should be
considered to be a ``cartoon'' in this respect.
\end{remark}

Having seen the situation on the manifold above, we observe that in this section we can do all of our work
within the chart $\mathcal{U}$ and so we can view our entire problem in the local picture found in
$\mathcal{V} := \varphi(\mathcal{U}) \subset \R^n,$ and this fact allows us to get away with some obvious
abuses of notation.  Indeed, we will use $q, D_r(x_0),$ and $D_s(x_0)$ as shorthand for
$\varphi(q), \varphi(D_r(x_0) \cap \mathcal{U}),$ and $\varphi(D_s(x_0) \cap \mathcal{U})$ respectively.
Since it will be convenient to work with a perfect ball in $\mathcal{V},$ we use $B_{\epsilon}(q)$ to denote the
largest ball centered at $\varphi(q)$ which is contained in $\varphi(B_{\delta}(q)).$
So within $\mathcal{V}$ we have a nondivergence form elliptic operator, $L,$ which we
can take to be defined on $C^{2}(\mathcal{V} \cap D_r(x_0))$ and which converges to the Laplacian in the sense
described above as we zoom in on $q.$   Lastly, we will obviously view all of our solutions to obstacle
problems (so $u_r$ and $w_r$ for example) as being functions defined on $\mathcal{V}.$  All of these
conventions lead to the local picture shown in Figure~\ref{fig:LocalPicture}.
\begin{figure}[!h]
	\centering
	\scalebox{.85}{\includegraphics{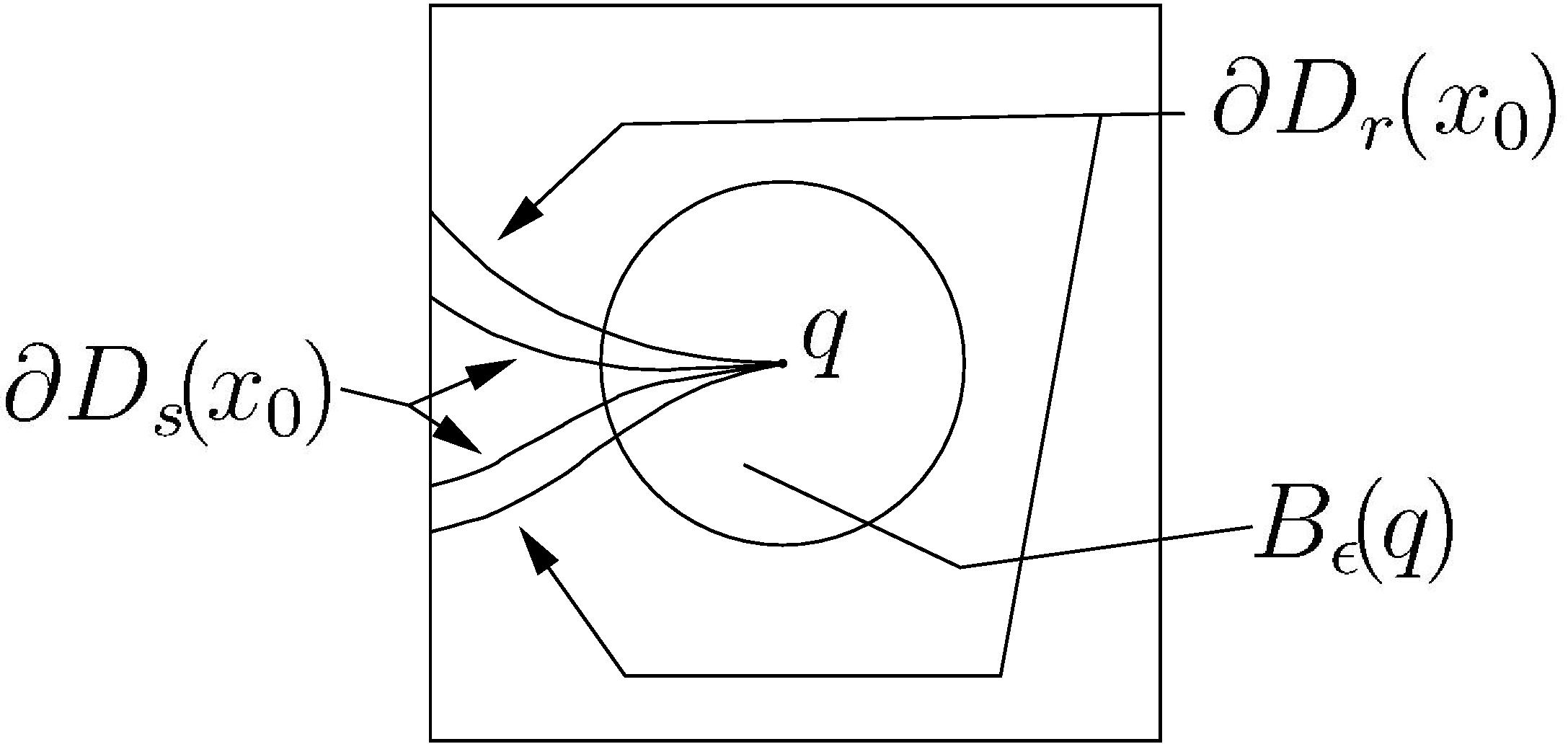}}
	\caption{The Local Picture in $\mathcal{V}.$}
	\label{fig:LocalPicture}
\end{figure}

Before jumping into the main proof, we observe the following two lemmas:
\begin{lemma}[Barrier Function Estimates]   \label{BFE}
By shrinking $\epsilon$ if necessary, we have
\begin{equation}
    2n - 1 \leq L( |x - y|^2 ) \leq 2n +1 \;,
\label{eq:Ldist2}
\end{equation}
for all $x \in B_{\epsilon}(q)$ and for any fixed $y \in B_{\epsilon}(q).$
\end{lemma}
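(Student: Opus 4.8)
The plan is a direct computation followed by an appeal to the continuity of the coefficients of $L$ at $q$. Fix $y \in B_{\epsilon}(q)$ and put $\phi(x) := |x-y|^2 = \sum_{k=1}^{n}(x_k - y_k)^2$, so that $\partial_{x_j}\phi = 2(x_j - y_j)$ and $\partial_{x_i}\partial_{x_j}\phi = 2\delta_{ij}$. Substituting into the expression for $L$ in \eqref{eq:BigUgly} gives
\[
   L\phi(x) \;=\; 2\sum_{i=1}^{n} g^{ii}(x) \;+\; 2\sum_{j=1}^{n} b^{j}(x)\,(x_j - y_j),
\]
and therefore
\[
   \bigl| L\phi(x) - 2n \bigr| \;\le\; 2\sum_{i=1}^{n}\bigl| g^{ii}(x) - 1 \bigr| \;+\; 2\,|b(x)|\,|x-y|.
\]
The first term on the right measures the deviation of the inverse metric from the identity; the second is the contribution of the drift.

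Next I would use that $\varphi$ is a normal coordinate chart at $q$, so $g^{ij}(q) = \delta^{ij}$ and $b^{j}(q) = 0$, together with the smoothness (hence continuity) of $g^{ij}$ and $b^{j}$ near $q$. Shrinking $\epsilon$ (and also requiring $\epsilon \le 1$), we may arrange that $\bigl| g^{ij}(x) - \delta^{ij}\bigr| \le \frac{1}{8n}$ and $\bigl| b^{j}(x)\bigr| \le \frac{1}{8n}$ for every $x \in B_{\epsilon}(q)$ and all indices. Since $x, y \in B_{\epsilon}(q)$ forces $|x-y| \le 2\epsilon \le 2$, the displayed inequality becomes
\[
   \bigl| L\phi(x) - 2n \bigr| \;\le\; 2n\cdot\frac{1}{8n} + 2\cdot\sqrt{n}\cdot\frac{1}{8n}\cdot 2 \;=\; \frac14 + \frac{1}{2\sqrt n} \;\le\; 1,
\]
which is exactly \eqref{eq:Ldist2}. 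Crucially, the threshold for $\epsilon$ depends only on the moduli of continuity of $g^{ij}$ and $b^{j}$ at $q$, not on the fixed point $y$, so the estimate holds uniformly for all $x, y \in B_{\epsilon}(q)$ at once.

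I do not anticipate a genuine obstacle: the lemma is simply the quantitative statement that, in normal coordinates, $L$ is a small perturbation of the Laplacian near $q$, specialized to the barrier $|x-y|^2$ (for which $\Delta(|x-y|^2) \equiv 2n$). The only point worth a line of care is the drift term $2\,b(x)\cdot(x-y)$, which nominally couples the two variables; it is harmless because $b$ vanishes at $q$ while the factor $|x-y|$ is automatically small once both points are confined to $B_{\epsilon}(q)$, so no shrinking of $\epsilon$ beyond what the coefficient continuity already forces is needed.
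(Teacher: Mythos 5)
Your proposal is correct and follows exactly the route the paper intends: evaluate $L$ on $|x-y|^2$ via the nondivergence form \eqref{eq:BigUgly} and use that $g^{ij}\to\delta^{ij}$ and $b^{j}\to 0$ near $q$ in normal coordinates, so shrinking $\epsilon$ makes $L(|x-y|^2)$ a uniformly small perturbation of $\Delta(|x-y|^2)=2n$. The paper declares this ``immediate''; you have simply written out the one-line computation and bookkept the dependence on $y$, both correctly.
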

\begin{proof}
This estimate follows immediately by using Equation\refeqn{BigUgly}along with the fact that
$g^{ij}(x) \rightarrow \delta^{ij}$ and $b^{i}(x) \rightarrow 0$ as $x \rightarrow q.$
\end{proof}

\begin{lemma}[Boundedness Estimate]   \label{BddEst}
For $x \in D_r(x_0) \cap \mathcal{V}$ and any direction $e$ we have
\begin{equation}
      |L(D_e w_r)| \leq C < \infty \;.
\label{eq:BddEstDeriv}
\end{equation}
\end{lemma}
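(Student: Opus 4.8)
The plan is to mimic the computation in the proof of Lemma \ref{Lemma:L(Dewr)}, but now carried out in the nondivergence-form setting provided by the chart $\mathcal{V}$. The point is that $w_r$ is smooth (indeed $C^{1,1}$) away from the singular point $x_0$, and since $q \in \partial D_r(x_0)$ is a free boundary point distinct from $x_0$, the ball $B_\epsilon(q) \subset \mathcal{V}$ stays a fixed positive distance away from $x_0$; thus all of the quantities we differentiate are controlled on the closure of $D_r(x_0) \cap \mathcal{V}$, uniformly. So the first step is to record that $w_r \in C^{1,1}\big(\overline{D_r(x_0)} \cap \overline{\mathcal V}\big)$ with a bound depending only on $r$, $\delta$, $\dist(q,x_0)$, and the ellipticity/smoothness data — this follows from optimal regularity for the obstacle problem (\cite{C1}, \cite{C2}, \cite{B}) together with interior estimates on the harmonic-type correction, exactly as in Remark \ref{AFD}.

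Next I would commute $L$ with the directional derivative $D_e$. Writing $L u = g^{ij} \partial_i \partial_j u + b^j \partial_j u$ as in \eqref{eq:BigUgly}, one has, formally,
\begin{equation*}
  L(D_e w_r) = D_e(L w_r) - (D_e g^{ij})\,\partial_i \partial_j w_r - (D_e b^j)\,\partial_j w_r .
\end{equation*}
In $\Omega(w_r) = D_r(x_0)$ we have $L w_r = r^{-n}$ (away from $x_0$), a constant, so $D_e(L w_r) = 0$ there. The remaining two terms involve $D_e g^{ij}$ and $D_e b^j$, which are bounded on $B_\epsilon(q)$ because $g$ is smooth (normal coordinates), together with $\partial_i\partial_j w_r$ and $\partial_j w_r$, which are bounded by the $C^{1,1}$ estimate of the first step. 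Hence $|L(D_e w_r)| \le C$ on $D_r(x_0) \cap \mathcal V$, with $C$ independent of $e$ (one may take the sup over unit $e$, since the bound is linear in the components of $e$).

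The one point that needs care — and is the main obstacle — is justifying the commutation identity, since $w_r$ is only $C^{1,1}$, so $\partial_i\partial_j w_r$ exists merely in $L^\infty$ and $D_e w_r$ is only Lipschitz; the identity above is not literally a pointwise classical computation. The clean way is to argue in the weak/distributional sense against test functions $\phi \in C_0^\infty(D_r(x_0)\cap \mathcal V)$, integrating by parts to move $D_e$ off $w_r$ and onto $\phi$, exactly as in the proof of Lemma \ref{Lemma:L(Dewr)} — there the divergence structure made this transparent, and here one can either first rewrite $L$ in divergence form plus a lower-order term (legitimate since $g^{ij}$ is smooth) or simply note that mollifications $w_r^\eta$ of $w_r$ satisfy the classical identity with uniformly bounded right-hand side and pass to the limit. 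Either route shows that $L(D_e w_r)$, a priori a distribution, is in fact represented by an $L^\infty$ function on $D_r(x_0)\cap\mathcal V$ with the stated uniform bound, which is all that is required for its later use as a barrier.
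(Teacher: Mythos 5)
Your argument is essentially identical to the paper's: both differentiate the equation $L w_r = g^{ij} D_{ij} w_r + b^i D_i w_r = r^{-n}$ in the direction $e$, use that the right side is constant so $D_e(L w_r) = 0$, and then bound the remaining commutator terms $(D_e g^{ij}) D_{ij} w_r$ and $(D_e b^i) D_i w_r$ by combining $C^{1,1}$ regularity of $w_r$ away from $x_0$ with smoothness of the coefficients in the chart. Your closing paragraph, which justifies the formal commutation for a function that is only $C^{1,1}$ (by mollification or a weak formulation), is a legitimate point of extra care that the paper's proof leaves implicit, but it does not change the route.
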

\begin{proof}
We observe that this estimate is very similar to the estimate in the previous section given in
Lemma\refthm{Lemma:L(Dewr)}\!\!.  On the other hand, this time the proof is easier.  We know that
in $D_r(x_0) \cap \mathcal{V}$ we have
$$L w_r = g^{ij}D_{ij} w_r + b^{i} D_i w_r = r^{-n} \;.$$
Differentiating this equation in the $e$ direction, we have:
\begin{alignat*}{1}
   0 &= D_e \left( g^{ij}D_{ij} w_r + b^{i} D_i w_r \right) \\
      &= L(D_e w_r) + (D_e g^{ij}) D_{ij} w_r + (D_e b^i) D_i w_r \;,
\end{alignat*}
so by using regularity known for solutions of the obstacle problem along with the regularity that
we have for the coefficients in our operator $L,$ we conclude that
$$|L(D_e w_r)| \leq |D_e g^{ij}| \cdot |D_{ij} w_r| + |D_e b^i| \cdot |D_i w_r| \leq C < \infty.$$
\end{proof}

\noindent
\textit{Proof of Theorem\refthm{CCMVT2}\!\!.} We can assume by shrinking $\epsilon$ again if necessary,
that $x_0 \notin B_{\epsilon}(q)$ and $B_{\epsilon}(q)$ has no intersection with $\partial \mathcal{M}$ if
$\mathcal{M}$ has boundary.  (Lemma 6.2 of \cite{BH1} guarantees that we can find such an $\epsilon.$)
Now we consider the function
\begin{equation}
     h := w_s - w_r - \mu D_{e} w_r
\label{eq:hdef}
\end{equation}
where $\mu > 0$ will be a very small number and $e$ will be a direction to be chosen later.  We are going to
arrive at a contradiction by showing that $h \geq 0$ in a ball around $q$ intersected with $D_r(x_0)$ while
using the asymptotics of the functions which make up $h$ along with a good choice of the direction $e$ allow
us to show that $h$ must be negative arbitrarily close to $q$ within $D_r(x_0).$

Now for any positive $\rho < \epsilon,$ we consider the set
$E_{\rho} := B_{\rho}(q) \cap D_r(x_0) = B_{\rho}(q) \cap \Omega(w_r).$ Within this set we have
$w_s - w_r \geq 0$ and $L(w_s - w_r) = s^{-n} - r^{-n} < 0.$
Hence, in $E_{\rho} \setminus \eta_{\gamma}(\partial E_{\rho})$ there exists a $\kappa$ such that
$w_s - w_r \geq \kappa > 0.$  Having made this observation, it turns out that we will need a more precise
lower bound, and by using the estimate from Lemma\refthm{BFE}we will succeed.  Along these lines we first
shrink $\epsilon$ (and therefore $\rho$) if necessary to be sure that that estimate applies, and we assume
that $z \in E_{\rho} \setminus \eta_{\gamma}(\partial E_{\rho})$ and observe that this implies that
$B_{\gamma}(z) \subset E_{\rho}.$  Next we define
$$\Theta(x) := \frac{(s^{-n} - r^{-n})(|x - z|^2 - \gamma^2)}{6n}$$
for use as a barrier function.  Indeed, observe that
\begin{enumerate}
    \item $\Theta = 0 \leq w_s - w_r$ on $\partial B_{\gamma}(z),$ and
    \item recalling that $L(w_s - w_r) = s^{-n} - r^{-n} < 0$ and using the last lemma we get:
\begin{alignat*}{1}
      L\Theta &= \frac{s^{-n} - r^{-n}}{6n} \cdot L( |x - z|^2 ) \\
                   &\geq (s^{-n} - r^{-n}) \cdot \left( \frac{2n + 1}{6n} \right) \\
                   &\geq L(w_s - w_r) \ \ \ \text{in} \ \ \ B_{\gamma}(z).
\end{alignat*}
\end{enumerate}
Thus, by using the weak maximum principle we have
$$w_s(z) - w_r(z) \geq \frac{\gamma^2 (r^{-n} - s^{-n})}{6n} \ \ \text{for all} \ z \ \text{within} \ \ 
    E_{\rho} \setminus \eta_{\gamma}(\partial E_{\rho})\;.$$

We can now observe the following properties of $h:$
\begin{enumerate}
    \item By assuming that $\mu$ is sufficiently small, we have
\begin{equation}
Lh = s^{-n} - r^{-n} - \mu L(D_e w_r) \leq - \alpha < 0  \ \text{in} \  E_{\epsilon} \;.
\label{eq:Lhestim}
\end{equation}
    \item $h = w_s \geq 0$ on $\partial \Omega(w_r).$
    \item Within $E_{\rho} \setminus \eta_{\gamma}(\partial \Omega(w_r)),$ by assuming that $\mu$ is sufficiently small, we have
\begin{equation}
    h \geq \frac{\gamma^2 (r^{-n} - s^{-n})}{6n} - \mu D_e w_r \geq \frac{\gamma^2 (r^{-n} - s^{-n})}{10n} \;.
\label{eq:AwayBO}
\end{equation}
    \item Within $E_{\rho} \cap \eta_{\gamma}(\partial \Omega(w_r)),$ by using the optimal gradient bounds for $w_r,$ we have
\begin{equation}
     h \geq - \mu \gamma C \;.
\label{eq:NearBO}
\end{equation}
\end{enumerate}
The picture can be seen in Figure \ref{fig:ForcePositive}.
\begin{figure}[!h]
	\centering
	\scalebox{.65}{\includegraphics{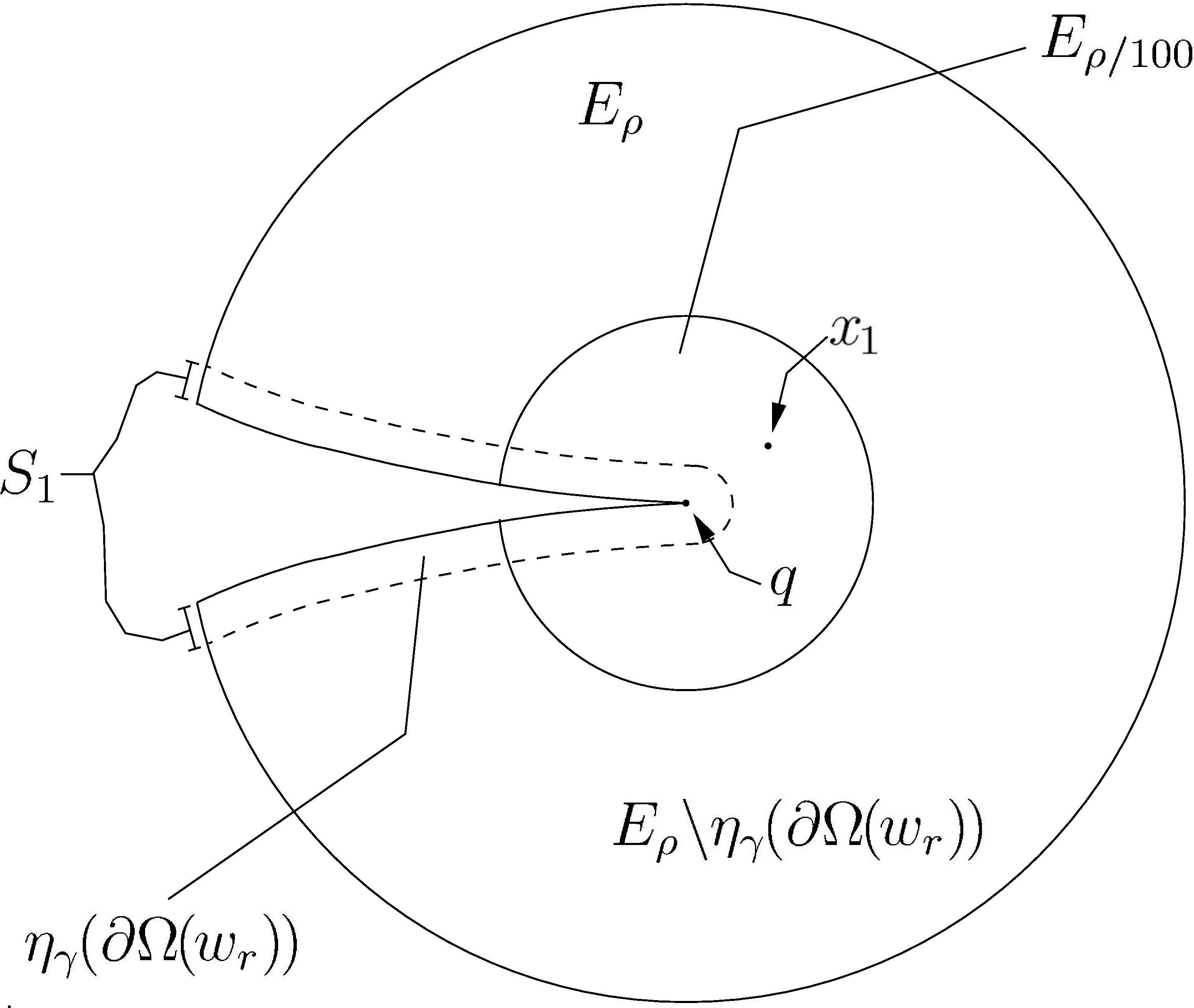}}
	\caption{The Picture in $B_{\rho}(q)$}
	\label{fig:ForcePositive}
\end{figure}

We are now in position to use the ideas within \cite[Lemma 11]{C2} in order to show that $h$ must be nonnegative
everywhere in a small enough ball around $q.$  On the other hand, for the sake of keeping this article more
self-contained, and because of slight changes that need to be made (largely because we have an operator
which is \textit{close} to the Laplacian, and not exactly the Laplacian) we will present the argument here.  In any
case we claim that $h \geq 0$ within $E_{\rho/100} = B_{\rho/100}(q) \cap \Omega(w_r)$ provided $\mu$ is sufficiently
small.

To begin the proof of our claim, we assume that there exists an $x_1 \in E_{\rho/100}$ with $h(x_1) < 0.$  We now
define
\begin{equation}
   v(x) := h(x) + \delta \left( \frac{r^{-n}}{4n}|x - x_1|^2 - w_r \right) \;,
\label{eq:UglyVdef}
\end{equation}
and we observe that $v(x_1) < 0.$  We also know that 
$$Lv \leq -\alpha + \delta\left( \frac{r^{-n}}{4n}(2n +1) - r^{-n} \right) \leq -\alpha < 0$$
in all of $E_{\epsilon}$ by using Lemma\refthm{BFE}and Equation\refeqn{Lhestim}\!.  So, by applying the weak
maximum principle, we can be sure that $v$ must attain a negative minimum on $\partial E_{\rho}.$  On the other hand,
all along $\partial \Omega(w_r),$ by using the definition of $h(x)$ we have $v(x) = w_s(x) + C\delta|x - x_1|^2 > 0.$
So, we know that $v(x)$ attains its negative mimimum on $\partial B_{\rho}(q) \cap \partial E_{\rho}.$  For this
remaining piece of the boundary, it is convenient to split it into
$S_1 :=  \eta_{\gamma}(\partial \Omega(w_r)) \cap \partial B_{\rho}(q)$ and 
$S_2 := \partial B_{\rho}(q) \setminus \eta_{\gamma}(\partial \Omega(w_r)),$ and then by
employing Equations\refeqn{NearBO}and\refeqn{AwayBO}on those sets respectively, we get:
\begin{equation}
  v \geq  - C_1 \mu \gamma + C_2 \delta r^{-n} \rho^2 - C_3 \delta \gamma^2 \ \ \ \ \text{on} \ 
  S_1
\label{eq:vNBO}
\end{equation}
and
\begin{equation}
  v \geq C_4 (r^{-n} - s^{-n}) \gamma^2 + \delta (C_5 r^{-n} \rho^2 - w_r) \ \ \ \ \text{on} \ 
  S_2 \;.
\label{eq:vABO}
\end{equation}
On both sets we wish to choose constants so that $v$ is forced to be nonnegative.  For $S_1$
we choose $\gamma <\!\!< \rho$ to force $C_2 r^{-n} \rho^2 > C_3 \gamma^2$ and then choose
$\mu$ as small as we need to give us the desired inequality.  For $S_2$ we choose $\delta <\!\!< \gamma^2$
and then shrink $\mu$ again if needed to fix the inequality on $S_1.$  So, at this point we have a contradiction
to any negativity of $v$ within $E_{\rho/100}.$

Now, just as in the end of the proof of Theorem\refthm{CCMVT1}\!, it follows from standard 
regularity and nondegeneracy estimates for the obstacle problem, that $w_s$ and $w_r$ are bounded by
a constant times $|x - q|^2$ within $E_{\rho/100},$ while $D_e w_r(x)$ must grow linearly for some
choice of $e$ within the same set.  Now by replacing $e$ with $-e$ if necessary, we get $h < 0$ somewhere
within $E_{\rho/100}$ and we have the desired contradiction.
\qed

\begin{remark}[Existence of singular points in mean value sets]   \label{ESPMVS}
Currently, it is unknown whether or not mean value sets of the type described in the previous section
\textit{ever} possess singular points.  So, there is an outside chance that they do not exist.  Having made
this observation, it is a rather simple matter to show the existence of mean value sets for the Laplace-Beltrami
operator on manifolds which have singular points.  Indeed, at the moment the topology of one of these
sets changes, you will necessarily have singular points.  (We can also say that by using the results within
\cite{AB} the free boundary won't ``jump'' from a configuration with one topology where the set is smooth
to a different topology with smooth boundary;  there will always be a moment with a ``collision.'')  For a
concrete example, consider harmonic functions on a typical cylinder.  Obviously for any such function, one
can ``unroll'' the cylinder and get a periodic harmonic function on $\R^2.$  Kuran proved that any connected
mean value set for the point $x_0 \in \R^n$ which has positive measure and which contains $x_0$ must be
(up to a set of measure zero) a ball centered at $x_0$ (\cite{Ku}).  So, the $D_r(x_0)$ which fit within one
period should be disks centered at $x_0.$  By increasing the radius of the disk until the diameter is the length
of a period, we get a mean value set which when viewed on the original cylinder, will have a ``double cusp.''
Thus, we can be certain that the proof that we gave of the theorem in this section doesn't apply only to the
empty set.
\end{remark}

\section{Singular Point Approximation}  \label{SingPtApprox}

As before in Section \ref{FirstApp} we consider an operator $L := \partial_i (a^{ij}(x) \partial_j),$ but
now, although we are still working with the obstacle problem, we are no longer working with mean-value
sets, and currently we will only assume a bound on $||a^{ij}||_{C^{0}(\closure{B_1})}.$  Because our
coefficients are always continuous, we can assume without loss of generality that $a^{ij}(0) = \delta^{ij}$
by changing coordinates.  In this setting, we let $w \in W^{1,2}(B_1)$ satisfy:
\begin{equation} \label{eq:wsat}
\begin{cases}
   Lu =\bigrchi_{\{u>0\}}  \quad \text{ in } B_1 \\
   u \geq 0 \\
   0 \in \text{Sing}(u).
\end{cases}
\end{equation}
Next, for any $r < 1,$ and any $t \in \R,$
we let $u_{r;t} \in W^{1,2}(B_r)$ to be the solution to
\begin{equation} \label{eq:urtsolves}
\begin{cases}
\Delta u = \bigrchi_{\{u>0\}} \quad &\text{ in } B_r\\
u=(w+t)^+ \quad &\text{ on } \partial B_r \\
u \geq 0
\end{cases}
\end{equation}
with the goal in this section of getting $u_{r,t}$ to approximate $w$ and to also have a singular free boundary point
at $0.$  One reason why we had this goal, was because we had hoped to generalize the regularity results of Figalli
and Serra (\cite{FS}) to obstacle problems with more general elliptic operators than simply the Laplacian.
Toward this aim, we will work with quadratic rescalings of $w$ and $u_{r;t},$ and with $T := t/r^2,$
we make the following list of definitions:
\begin{alignat*}{1}
     w_r(x)         &:= \frac{w(rx)}{r^2} \\
     v_{r;T}(x)   &:= \frac{u_{r;t}(rx)}{r^2} \\
     a^{ij}_r(x)  &:= a^{ij}(rx) \\
     L_r               &:= D_i(a^{ij}_r(x)D_j).
\end{alignat*}
Then we observe that $w_r$ satisfies
\begin{equation} \label{eq:wrprob}
\begin{cases}
    L_ru =\bigrchi_{\{u>0\}}  \quad \text{ in } B_1 \\
    u \geq 0 \\
    0 \in \text{Sing}(u), \\
\end{cases}
\end{equation}
and $v_{r;T}$ is the solution to
\begin{equation} \label{eq:vrTsolves}
\begin{cases}
\Delta u = \bigrchi_{\{u>0\}} \quad &\text{ in } B_1\\
u=(w_r+T)^+ \quad &\text{ on } \partial B_1 \\
u \geq 0.
\end{cases}
\end{equation}
Furthermore, we observe that by decreasing $r$ we can make
$$||a^{ij}_{r}(\cdot) - \delta^{ij}||_{C^{0}(\closure{B_1})}$$
as small as we like.

\begin{lemma}[Getting $0$ into FB]  \label{GZFB}
Given $0 < r \leq 1,$ and defining $w_r$ and $v_{r;T}$ as above, there exists an $S = S(r)$ so that
$0 \in FB(v_{r;S}).$
\end{lemma}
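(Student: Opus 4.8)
The plan is to tune the translation parameter $T$ and catch the value at which the rescaled Laplacian solution first becomes positive at the origin. Set $m(T):=v_{r;T}(0)$ for $T\in\R$. First I would record a comparison estimate that is immediate from Lemma~\ref{B27c} (applied on $B_1$ with the operator $\Delta$ and $g\equiv 1$): for $T_1\le T_2$ the boundary data obey $(w_r+T_1)^+\le (w_r+T_2)^+\le (w_r+T_1)^+ +(T_2-T_1)$ pointwise on $\partial B_1$, and hence
\[
   v_{r;T_1}\le v_{r;T_2}\le v_{r;T_1}+(T_2-T_1)\qquad\text{in }B_1 .
\]
In particular $m$ is nondecreasing and $1$-Lipschitz, hence continuous.

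Next I would pin down the two extremes of $m$. Since $w$ is continuous, $w_r$ is bounded on $\overline{B_1}$, so for $T$ sufficiently negative the boundary datum $(w_r+T)^+$ vanishes identically; uniqueness for the obstacle problem then forces $v_{r;T}\equiv 0$, so $m(T)=0$. For $T$ large, $(w_r+T)^+=w_r+T>0$ on $\partial B_1$, and the explicit function $x\mapsto \tilde h(x)+(|x|^2-1)/(2n)$, with $\tilde h$ the harmonic extension of $w_r+T$ into $B_1$, solves $\Delta u=1$ in $B_1$, attains the correct boundary data, and is strictly positive on $\overline{B_1}$ once $T>1/(2n)-\min_{\partial B_1}w_r$; being nonnegative it therefore solves \eqref{eq:vrTsolves}, so by uniqueness it equals $v_{r;T}$ and $m(T)>0$. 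Thus $\{T:m(T)=0\}$ is nonempty, bounded above, and (by monotonicity of $m$) a half-line, which is closed because $m$ is continuous. Define
\[
   S=S(r):=\sup\{\,T\in\R:\ v_{r;T}(0)=0\,\},
\]
so that $v_{r;S}(0)=0$ while $v_{r;T}(0)>0$ for every $T>S$.

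Finally I would show $0\in FB(v_{r;S})$. Since $v_{r;S}\ge 0$ and $v_{r;S}(0)=0$, the origin lies in the closed set $\Lambda(v_{r;S})$ and not in $\Omega(v_{r;S})$, so it suffices to check that $0$ is not an interior point of $\Lambda(v_{r;S})$; granting that, the origin lies in $\overline{\Omega(v_{r;S})}\setminus\Omega(v_{r;S})=\partial\Omega(v_{r;S})=FB(v_{r;S})$ (using that $\Omega(v_{r;S})$ and $\Lambda(v_{r;S})$ partition $B_1$). Suppose instead that $v_{r;S}\equiv 0$ on some ball $B_\rho(0)$ with $\overline{B_\rho(0)}\subset B_1$. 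For small $\epsilon>0$ the comparison estimate above gives $v_{r;S+\epsilon}\le v_{r;S}+\epsilon=\epsilon$ throughout $B_\rho(0)$; but $v_{r;S+\epsilon}(0)>0$ places $0$ in $\Omega(v_{r;S+\epsilon})$, so the nondegeneracy estimate for the Laplacian obstacle problem (see \cite{C1}, \cite{B}) yields $\sup_{B_\rho(0)}v_{r;S+\epsilon}\ge c\rho^2$ for a universal constant $c>0$. Choosing $\epsilon<c\rho^2$ is a contradiction, which completes the proof.

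Given Lemma~\ref{B27c} and the standard regularity and nondegeneracy theory for the Laplacian obstacle problem, the argument is largely routine; the one place that needs care is this last step, where continuity of $T\mapsto v_{r;T}$ is what guarantees that $S$ is attained with $v_{r;S}(0)=0$, while nondegeneracy is what prevents the tuned solution from ``overshooting'' and leaving $0$ strictly inside the contact set. I expect that to be the main (and only mild) obstacle.
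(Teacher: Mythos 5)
Your proof is correct and follows essentially the same strategy as the paper: tune $T$ so the contact at the origin is marginal, use Lemma~\ref{B27c} for monotone comparison of the $v_{r;T}$, and use nondegeneracy to rule out $0 \in \text{int}(\Lambda(v_{r;S}))$. The one mild streamlining on your side is that by defining $S$ as $\sup\{T : v_{r;T}(0)=0\}$ and extracting up front from Lemma~\ref{B27c} that $m(T):=v_{r;T}(0)$ is nondecreasing and $1$-Lipschitz, you obtain $v_{r;S}(0)=0$ for free and thereby skip the paper's first contradiction case ($0 \in \Omega(v_{r;S})$), needing only the nondegeneracy argument the paper uses in its second case.
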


\begin{proof}
We define the set $I := \left\{ T \in \R \ \rule[-.03in]{.01in}{.18in} \ 0 \in \text{int}(\Lambda(v_{r;T})) \right\}\!,$
and observe that $I$ is a bounded nonempty set.  Indeed, if $T$ is sufficiently negative, then $v_{r;T} \equiv 0,$
and if $T$ is more than $1/2n,$ then $v_{r;T}(x) > |x|^2/2n.$  Indeed, the following inclusions follow from those
observations:
\begin{equation}
      ( - \infty, - \max_{B_1}{w_r}) \subset I \subset ( - \infty, 1/2n )
\label{eq:SetBounds}
\end{equation}
So, we let $S := \sup I,$ and we claim that $0 \in FB(v_{r;S}).$

Suppose not.  Then either $0 \in \Omega(v_{r;S})$ or $0 \in \text{int}(\Lambda(v_{r;S})).$  In the first case
we have a closed ball $\qclosure{B_{\epsilon}(0)} \, \subset \Omega(v_{r;S}),$ and so in this case we let
$$\gamma := \min \left\{ v_{r;S}(x) \ \rule[-.03in]{.01in}{.18in} \ x \in \; \qclosure{B_{\epsilon}(0)} \right\} \;.$$
Now we define $\tilde{S} := S - \gamma/2$ and by using Lemma\refthm{B27c}we have
$$\gamma \leq v_{r;S}(0) \leq v_{r;\tilde{S}}(0) + \frac{\gamma}{2} \;,$$
and this inequality implies $\sup I \leq \tilde{S} < S$ which is a contradiction.

In the other case, we have a closed ball $\qclosure{B_{\epsilon}(0)} \, \subset \Lambda(v_{r;S}).$  Now we let
$T_j \downarrow S$ and observe that by the definition of $S,$ we
have $0 \in \, \pclosure{\, \Omega(v_{r;T_j})}$ for all $T_j.$  Using the standard nondegeneracy results for the
obstacle problem, for every $j,$ we have an $x_j$ in $B_{\epsilon/2}(0)$ with
$$v_{r;T_j}(x_j) \geq C \epsilon^2 \;.$$
This inequality leads to,
$$C \epsilon^2 \leq ||v_{r;S}- v_{r;T_j}||_{L^{\infty}(B_{\epsilon}
(0))} \leq ||v_{r;S}- v_{r;T_j}||_{L^{\infty}(B_1(0))} \leq ||v_{r;S}- 
v_{r;T_j}||_{L^{\infty}(\partial B_1(0))},$$
where the last inequality is by Lemma\refthm{B27c}again. However, since
$$ ||v_{r;S}- v_{r;T_j}||_{L^{\infty}(\partial B_1(0))} \leq ||S-T_j ||
_{L^{\infty}(\partial B_1(0))} \rightarrow 0 \;,$$
we have a contradiction.
\end{proof}

\begin{lemma}[Uniqueness of $S$]  \label{UoS}
The $S(r)$ given in Lemma\refthm{GZFB}is the only number $S,$ such that $0 \in FB(v_{r;S}).$
\end{lemma}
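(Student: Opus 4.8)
The plan is to argue by contradiction. Suppose there are two distinct values $S_1<S_2$ with $0\in\FB(v_{r;S_1})\cap\FB(v_{r;S_2})$, and run the barrier argument from the proof of Theorem~\ref{CCMVT2}, now with $D_e v_{r;S_1}$ in the role of the barrier. Put $v:=v_{r;S_2}-v_{r;S_1}$. Both $v_{r;S_i}$ solve the obstacle problem \eqref{eq:vrTsolves} with boundary data $(w_r+S_i)^+$, and these data satisfy $(w_r+S_1)^+\le(w_r+S_2)^+\le(w_r+S_1)^++(S_2-S_1)$, so Lemma~\ref{B27c} gives $v\ge 0$ on $B_1$; moreover $v$ is harmonic on $\Omega(v_{r;S_1})$ (the two solutions satisfy $\Delta u=1$ there), and $0\le v\le v_{r;S_2}\le C|x|^2$ near $0$ by optimal regularity, since $0\in\FB(v_{r;S_2})$. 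The only place where the proof of Theorem~\ref{CCMVT2} genuinely used that the two source terms differed was in producing, through the quadratic barrier $\Theta$, a lower bound $w_s-w_r\ge c\gamma^2>0$ away from the free boundary; here the sources coincide, so this estimate must be obtained by another route.

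That route is the claim that $v>0$ everywhere on $\Omega(v_{r;S_1})$. To prove it, fix a connected component $\mathcal O$ of $\Omega(v_{r;S_1})$. Since $v_{r;S_1}$ is subharmonic, is positive on $\mathcal O$, and vanishes on $\partial\mathcal O\cap B_1$, the maximum principle forces $\overline{\mathcal O}$ to meet $\partial B_1$, and then forces $(w_r+S_1)^+$ to be strictly positive at some point $p\in\partial\mathcal O\cap\partial B_1$ (otherwise $v_{r;S_1}\equiv 0$ on all of $\partial\mathcal O$, whence $v_{r;S_1}\le 0$ on $\mathcal O$, a contradiction). At such a $p$ both boundary data equal $w_r(p)+S_i$, so $v(p)=S_2-S_1>0$; since $v$ is then a nonnegative harmonic function on $\mathcal O$ that does not vanish identically on $\overline{\mathcal O}$, the strong minimum principle gives $v>0$ on $\mathcal O$. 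Consequently, fixing a small $\rho$ with $\overline{B_\rho(0)}\subset B_1$ and then $\gamma\ll\rho$, the set $K:=\overline{B_\rho(0)}\cap\Omega(v_{r;S_1})\cap\{\dist(\cdot,\FB(v_{r;S_1}))\ge\gamma\}$ is a compact subset of $\Omega(v_{r;S_1})$, nonempty once $\gamma$ is small enough (by nondegeneracy), so $\kappa:=\min_K v>0$.

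With $\kappa$ in hand, the argument of \cite[Lemma 11]{C2}, as used in the proof of Theorem~\ref{CCMVT2}, goes through. Fix a unit vector $e$, set $h:=v-\mu D_e v_{r;S_1}$ on $E_\rho:=B_\rho(0)\cap\Omega(v_{r;S_1})$ for small $\mu>0$, and observe that $h=v_{r;S_2}\ge 0$ on $\FB(v_{r;S_1})$, that $h\ge -C\mu\gamma$ on the part of $\partial B_\rho(0)$ lying within $\gamma$ of $\FB(v_{r;S_1})$ (optimal gradient bound for $v_{r;S_1}$), and that $h\ge\kappa-C\mu$ on the part of $\partial B_\rho(0)$ at distance $\ge\gamma$ from $\FB(v_{r;S_1})$. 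If $h(x_1)<0$ for some $x_1\in E_{\rho/100}$, set
$$v^{\ast}:=h+\delta\Big(\tfrac{1}{4n}|x-x_1|^2-v_{r;S_1}\Big).$$
The one bookkeeping change from Theorem~\ref{CCMVT2} is that, because $\Delta h=0$ on $E_\rho$, the strict superharmonicity $\Delta v^{\ast}=-\tfrac{\delta}{2}<0$ is now supplied entirely by the $-\delta v_{r;S_1}$ term. Choosing $\gamma\ll\rho$, then $\delta$ small against $\kappa$, then $\mu$ small against $\delta$, one verifies $v^{\ast}\ge 0$ on $\partial E_\rho$ while $v^{\ast}(x_1)<0$, contradicting the minimum principle for the superharmonic function $v^{\ast}$. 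Hence $h\ge 0$ on $E_{\rho/100}$; since every constant above was uniform in $e$, this gives $v\ge\mu D_e v_{r;S_1}$ on $E_{\rho/100}$ for every unit vector $e$.

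Finally, exactly as at the ends of the proofs of Theorems~\ref{CCMVT1} and~\ref{CCMVT2}: $v\le C_1|x|^2$ near $0$, whereas nondegeneracy of $v_{r;S_1}$ at its free boundary point $0$ produces, for every small $\sigma$, a point $y\in B_\sigma(0)\cap\Omega(v_{r;S_1})$ and a unit vector $e$ with $D_e v_{r;S_1}(y)\ge C_2\sigma$. Taking $\sigma<\rho/100$ and using the inequality just established, $\mu C_2\sigma\le\mu D_e v_{r;S_1}(y)\le v(y)\le C_1\sigma^2$, which fails for $\sigma$ small. This contradiction shows that at most one value of $S$ can put $0$ on the free boundary, and since Lemma~\ref{GZFB} exhibits one such value, namely $S(r)$, the lemma follows. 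I expect the main difficulty to be the positivity claim $v>0$ on $\Omega(v_{r;S_1})$ — the substitute for the quadratic barrier estimate — together with the careful ordering of the smallness of $\rho,\gamma,\delta,\mu$ needed to make all the boundary inequalities for $v^{\ast}$ hold at once.
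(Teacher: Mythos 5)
Your proof is correct, but it follows a genuinely different (and heavier) route than the paper's. The paper also sets $V := v_{r;S_2}-v_{r;S_1}$ and $h := V - \mu D_e v_{r;S_1}$, but it exploits that the two boundary data differ by exactly the constant $S_2 - S_1$ wherever $v_{r;S_1}>0$ on $\partial B_1$: by continuity of $V$ one then gets $V \ge \frac{1}{2}(S_2-S_1)$ on $\partial B_\delta \cap \Omega(v_{r;S_1})$ for some $\delta$ close to $1$, which makes $h\ge 0$ on the whole boundary of $B_\delta\cap\Omega(v_{r;S_1})$ (recall $D_e v_{r;S_1}=0$ on the free boundary); since $h$ is harmonic there, the ordinary minimum principle gives $h\ge 0$ throughout, and then the same asymptotic nondegeneracy-versus-$C^{1,1}$ contradiction you use closes the argument. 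No Lemma-11-style comparison with an auxiliary superharmonic $v^\ast$ is needed, precisely because $h\ge0$ is certified on the \emph{full} boundary rather than merely ``nearly'' ($\ge -C\mu\gamma$) near the free boundary. Your approach substitutes for that boundary control a stronger interior fact --- that $V>0$ everywhere on $\Omega(v_{r;S_1})$, proved via the observation that each connected component of $\Omega(v_{r;S_1})$ must reach $\partial B_1$ at a point where $(w_r+S_1)^+>0$ --- and then pays for the resulting lack of a clean boundary sign with the full Caffarelli barrier machinery (the $-\delta v_{r;S_1}$ term replacing the strict superharmonicity lost because the source terms of the two obstacle problems coincide, a point you identify accurately). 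This buys some robustness: your argument never touches the somewhat delicate continuity claim near $\partial B_1$, at the cost of the more elaborate component-wise positivity lemma and the Lemma~11 bookkeeping. Both proofs invoke Lemma~\ref{B27c} for $\Omega(v_{r;S_1})\subset\Omega(v_{r;S_2})$ and $V\ge 0$, both use $D_e v_{r;S_1}$ as the barrier, and both conclude by combining the quadratic bound $V\le C|x|^2$ (from optimal regularity at $0\in FB(v_{r;S_2})$) with the linear growth of $D_e v_{r;S_1}$ forced by nondegeneracy at $0\in FB(v_{r;S_1})$.
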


\begin{proof} Suppose not.  Then there exists $S_1 < S_2$ such that $0 \in FB(v_{r;S_1}) \cap FB(v_{r;S_2}).$
Now from Lemma\refthm{B27c}we know that $\Omega(v_{r;S_1}) \subset \Omega(v_{r;S_2}),$ and we
consider the function
\begin{equation}
    V := v_{r;S_2} - v_{r;S_1} \,.
\label{eq:BigVdef}
\end{equation}
We observe that $V = S_2 - S_1 > 0$ on $\partial B_1 \cap \Omega(v_{r;S_1}),$ and $V \in C^{0}(\closure{B_1}).$
By using the continuity of $V,$ we have a $\delta \in (3/4, 1)$ so that $V \geq \frac{1}{2}(S_2 - S_1)$ on
$\partial B_{\delta} \cap \Omega(v_{r;S_1}).$  We now define the function
\begin{equation}
    h:= V - \mu D_e v_{r;S_1}
\label{eq:newhdef}
\end{equation}
for a direction $e$ to be chosen later.  We observe that $h$ is harmonic, and because
$v_{r;S_1} \in C^{1,1}(\dclosure{B_{\delta}})$ we can choose $\mu$ to be sufficiently small so that
$h > 0$ on $\partial B_{\delta} \cap \Omega(v_{r;S_1}).$  Now by the optimal regularity results for the
obstacle problem $v_{r;S_1} = D_e v_{r;S_1} = 0$
on all of $\partial \Omega(v_{r;S_1}) \cap B_1,$ so we observe that $h \geq 0$ on all of
$\partial (B_{\delta} \cap \Omega(v_{r;S_1})).$  Thus, the maximum principle gives us $h \geq 0$ in all
of $B_{\delta} \cap \Omega(v_{r;S_1}).$  Now by proceeding exactly as in the proof of
Theorem\refthm{CCMVT1}where we use the asymptotics of the functions making up $h$ to find a spot
where it is negative (and assigning an appropriate direction $e$) we get a contradiction.
\end{proof}

\begin{lemma}[$S(r) \rightarrow 0$ as $r \rightarrow 0$]   \label{SgtZ}
For the $S(r)$ given in Lemma\refthm{GZFB}we have
\begin{equation}
     \lim_{r \rightarrow 0} S(r) = 0 \;.
\label{eq:SgtZ}
\end{equation}
\end{lemma}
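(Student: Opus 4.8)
The plan is to argue by contradiction, extracting a limit of the rescaled obstacle-problem solutions and using the nondegeneracy/regularity machinery to show that the limiting configuration cannot have $0$ on its free boundary unless $S = 0$. Suppose the conclusion fails. Since $S(r)$ is bounded (by \eqref{eq:SetBounds}, $S(r) \in (-\max_{B_1} w_r, 1/2n)$, and $w_r(0)=0$ with $0 \in \text{Sing}(w_r)$ forces $\max_{B_1} w_r$ to stay controlled via optimal regularity of $w$), we may pass to a subsequence $r_j \to 0$ along which $S(r_j) \to S_\infty \ne 0$. Along this subsequence, $a^{ij}_{r_j} \to \delta^{ij}$ uniformly on $\closure{B_1}$, so the operators $L_{r_j}$ converge to $\Delta$; moreover $w_{r_j} \to w_\infty$ in (say) $C^{1,\alpha}_{\rm loc}(B_1)$ where $w_\infty$ solves $\Delta u = \bigrchi_{\{u>0\}}$ in $B_1$, $u \geq 0$, and $0 \in \text{Sing}(w_\infty)$ — here I use the uniform optimal regularity and nondegeneracy estimates for the obstacle problem together with standard blowup/compactness arguments. (If $w_\infty \equiv 0$ on $B_1$ one must be a bit careful, but then the nondegeneracy of $w_{r_j}$ at $0$ would be violated, so $w_\infty$ is a genuine nonzero solution with $0$ a singular point; in particular $w_\infty$ has a nontrivial quadratic polynomial blowup at $0$.)

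Next I would pass to the limit in the problems \eqref{eq:vrTsolves} defining $v_{r_j;S(r_j)}$. Since the boundary data $(w_{r_j} + S(r_j))^+ \to (w_\infty + S_\infty)^+$ uniformly on $\partial B_1$, Lemma~\ref{B27c} (applied with $g \equiv 1$) gives $\|v_{r_j;S(r_j)} - v_{\infty}\|_{L^\infty(B_1)} \to 0$, where $v_\infty$ is the solution of $\Delta u = \bigrchi_{\{u>0\}}$ in $B_1$, $u = (w_\infty + S_\infty)^+$ on $\partial B_1$, $u \geq 0$. The crucial point is stability of the condition $0 \in FB$: using nondegeneracy for $v_{r_j;S(r_j)}$ at $0$ (there is $x_j \in B_\rho(0)$ with $v_{r_j;S(r_j)}(x_j) \geq c\rho^2$ for every small $\rho$) and passing to the limit we get $0 \in FB(v_\infty)$ or $0 \in \text{int}(\Lambda(v_\infty))$; conversely if $0 \in \Omega(v_\infty)$ a uniform interior lower bound would survive, contradicting $v_{r_j;S(r_j)}(0) = 0$ eventually. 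Combining, $0 \in FB(v_\infty)$.

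Now I would compare $v_\infty$ with $w_\infty$ directly, both solving the \emph{same} equation $\Delta u = \bigrchi_{\{u>0\}}$ in $B_1$ with $u \geq 0$. If $S_\infty > 0$, then on $\partial B_1$ we have $v_\infty = (w_\infty + S_\infty)^+ \geq w_\infty + S_\infty > w_\infty$ wherever $w_\infty$ touches its obstacle or is small, and in general $(w_\infty+S_\infty)^+ \geq w_\infty$; by Lemma~\ref{B27c} (now comparing $w_\infty$ and $v_\infty$ as solutions with the same right-hand side structure) we get $w_\infty \leq v_\infty \leq w_\infty + S_\infty$, hence $\Omega(w_\infty) \subset \Omega(v_\infty)$ and the argument of Theorem~\ref{CCMVT1} / Lemma~\ref{UoS} applies: with $V := v_\infty - w_\infty \geq 0$, $\Delta V = 0$ in $\Omega(w_\infty)$... — wait, here both have the same forcing, so $\Delta V = \bigrchi_{\{v_\infty>0\}} - \bigrchi_{\{w_\infty>0\}} \geq 0$, and $V \geq S_\infty > 0$ on $\partial B_1$; since $0 \in FB(w_\infty) \cap FB(v_\infty)$ we can build the barrier $h := V - \mu D_e w_\infty$, show $h \geq 0$ near $0$ by the maximum principle (using $V = v_\infty \geq 0$ and $D_e w_\infty = 0$ on $\partial\Omega(w_\infty)$, plus $V \geq \kappa > 0$ on the far boundary), and then reach a contradiction from optimal regularity ($V \leq C|x|^2$) versus nondegeneracy ($\sup |D_e w_\infty|$ grows linearly for a good choice of $e$), exactly as in the proof of Theorem~\ref{CCMVT1}. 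The case $S_\infty < 0$ is handled symmetrically: then $v_\infty \leq w_\infty$, $\Omega(v_\infty) \subset \Omega(w_\infty)$, and one runs the same barrier argument with the roles of $w_\infty$ and $v_\infty$ reversed. Either way we contradict $S_\infty \ne 0$, so $S(r) \to 0$.

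The main obstacle I expect is the compactness/convergence step and in particular ruling out degenerate limits: one must ensure that $w_\infty$ is a \emph{nontrivial} solution with $0$ genuinely singular (so that the linear-growth lower bound for $D_e w_\infty$ is available), and one must verify that $0 \in FB$ is preserved in the limit for $v_{r_j;S(r_j)}$ — both of which rely on having \emph{uniform} (in $r$) nondegeneracy and optimal-regularity constants for the obstacle problem with operators $L_r$ uniformly close to $\Delta$. Once that uniformity is in hand, the rest is a repackaging of the barrier argument already used twice in the paper.
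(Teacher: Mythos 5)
Your proposal is correct and takes essentially the same route as the paper: contradiction, subsequence, pass to the limit in both families of obstacle problems, note that $0$ stays on the free boundary of the limit $v_\infty$, and then derive a contradiction from the barrier argument of Lemma~\ref{UoS} applied to the limiting pair. The one concern you flag at the end --- justifying that $w_\infty$ is a genuine nontrivial solution with $0 \in \text{Sing}(w_\infty)$ so that the linear-growth lower bound for $D_e w_\infty$ is available --- is exactly what the paper settles cleanly by invoking the blow-up classification at singular points (\cite[Lemma 3.1 and 3.2]{BH2}), which identifies $w_\infty = \tfrac12 x^T M x$ with $M \geq 0$ and $\text{Trace}(M)=1$; with that closed form in hand, both the singularity of $0$ and the needed direction $e$ (any direction not in $\ker M$) are immediate, and Lemma~\ref{UoS} applies directly rather than requiring the barrier argument to be re-run by hand.
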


\begin{proof} Suppose not.  Then since the $S(r)$ are uniformly bounded, we can find a sequence
$r_j \rightarrow 0$ such that $S(r_j) \rightarrow \tilde{S} \ne 0.$  By our assumptions about $w_r,$
and by applying \cite[Lemma 3.1 and Lemma 3.2]{BH2}
we know that $w_{r_j} \rightarrow w_0 = \frac{1}{2}(x^{T} Mx)$ uniformly where $M$ is a
nonnegative matrix and $\Delta w_0 = \text{Trace}(M) = 1.$  So, we know that $w_0$ satisfies
\begin{equation} \label{eq:w0sat}
\begin{cases}
   \Delta u =\bigrchi_{\{u>0\}}  \quad \ &\text{ in } B_1 \\
   u(x) = x^{T} Mx \quad &\text{ on } \partial B_1 \\
   u \geq 0
\end{cases}
\end{equation}
and additionally, $0 \in \text{Sing}(w_0).$  On the other hand, since $w_{r_j} \rightarrow \frac{1}{2}(x^{T} Mx)$
uniformly, we know that the boundary data of $v_{r_j;S_j}$ converges uniformly to
$(\frac{1}{2}(x^{T} Mx) + \tilde{S})^{+}.$  So, we have that the limit of the $v_{r_j;S_j},$ which we will call
``$v_{0;\tilde{S}}$'' satisfies
\begin{equation} \label{eq:v0sat}
\begin{cases}
   \Delta u =\bigrchi_{\{u>0\}}  \quad \ &\text{ in } B_1 \\
   u(x) = (x^{T} Mx + \tilde{S})^{+} \quad &\text{ on } \partial B_1 \\
   u \geq 0
\end{cases}
\end{equation}
and furthermore $0 \in \text{FB}(v_{0;\tilde{S}}).$  Now, since $\tilde{S} \ne 0$ we can use the functions
$w_0$ and $v_{0;\tilde{S}}$ along with Lemma\refthm{UoS}to get a contradiction.
\end{proof}

\begin{remark}[$v_{0;\tilde{S}} = w_0$]   \label{UsefulShit}
It follows from knowing that $\tilde{S} = 0$ along with Equations\refeqn{w0sat}and\refeqn{v0sat}and
the uniqueness of the solutions to such problems that $v_{0;\tilde{S}} = w_0$ and in particular
$0 \in \text{Sing}(v_{0;\tilde{S}}).$  We will use this fact in the next proof.
\end{remark}

Finally, to strengthen the statement of our final theorem, we follow \cite{FS} and for
$m \in \{ 0, 1, 2, \ldots, n - 1 \}$ we define the $m$-th stratum of the singular set to be the subset of
the singular set where the dimension of the kernel of the blow up limit is $m.$

\begin{theorem}[Preserving the Singular Point and Bounding the Stratum]   \label{PSP}
Given the function $w,$ there exists an $R > 0$ such that $0 \in \textnormal{Sing}(v_{r;S(r)})$ for all $r < R.$
Furthermore, by shrinking $R$ if necessary, this singular point is in the same or lower stratum as it is with $w.$
(i.e. If $0$ is in the $k$-th stratum of $w,$ then it will always belong to the strata for $v_{r;S(r)}$ with
$m \leq k$ for $r < R.$)
\end{theorem}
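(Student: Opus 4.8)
The plan is to argue by contradiction and compactness, reduce both assertions to statements about blow-ups, and then push through a barrier argument of the kind used in Sections~\ref{FirstApp}--\ref{SecondApp}, supplemented by the fine regularity theory for the obstacle problem near singular points.

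First I would suppose the first assertion fails, producing a sequence $r_j \downarrow 0$ with $0 \notin \text{Sing}(v_{r_j;S(r_j)})$; since $0 \in FB(v_{r_j;S(r_j)})$ by Lemma~\ref{GZFB}, this means $0 \in \text{Reg}(v_{r_j;S(r_j)})$. Then, arguing exactly as in the proof of Lemma~\ref{SgtZ} --- using $S(r_j) \to 0$ (Lemma~\ref{SgtZ}), the uniform convergence $w_{r_j} \to w_0 = \tfrac12 x^T M x$ from \cite[Lemmas~3.1--3.2]{BH2}, and the $L^\infty$-stability of Lemma~\ref{B27c} to absorb the change of boundary data (controlled by $|S(r_j)|$) and the convergence $a^{ij}_{r_j} \to \delta^{ij}$ --- I get $v_{r_j;S(r_j)} \to w_0$ in $C^1_{\mathrm{loc}}(B_1)$, while $0 \in \text{Sing}(w_0)$ by Remark~\ref{UsefulShit}. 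Caffarelli's free boundary regularity (\cite{C1}, \cite{C2}) then says that the blow-up of $v_{r_j;S(r_j)}$ at $0$ is a half-space solution $\tfrac12((x\cdot e_j)^+)^2$, and that there is an interior ball $B_{\rho_j}(z_j)\subset\Omega(v_{r_j;S(r_j)})$ with $0\in\partial B_{\rho_j}(z_j)$.

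The heart of the matter --- and the hard part --- is ruling out this regular alternative, and it cannot come from the convergence alone: a sequence of solutions regular at $0$ can perfectly well converge to a solution singular at $0$ (picture a shrinking ellipsoidal coincidence set whose boundary stays smooth at $0$). The contradiction has to use how $v_{r;S(r)}$ is \emph{constructed}, in particular its closeness to $w_{r_j}$, which is genuinely singular at $0$. I would run the scheme from the proof of Theorem~\ref{CCMVT1}: for a suitable direction $e$ and small $\mu>0$, form an auxiliary function $h$ of the shape $(\text{difference of the two solutions}) - \mu\,D_e(\text{derivative barrier})$, show $h\ge 0$ on a small ball about $0$ intersected with $\Omega(v_{r_j;S(r_j)})$ by the weak maximum principle (absorbing the $C^0$-small mismatch between $\Delta$ and $L_{r_j}$), and then force $h<0$ arbitrarily near $0$ by playing the optimal quadratic bound for one solution against nondegeneracy of the other --- with $e$ chosen so as to exploit that a solution regular at $0$ has a single-signed directional derivative near $0$ (Caffarelli's directional monotonicity) whereas $w_{r_j}$, being singular at $0$, does not. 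The main obstacle is that this comparison, and the passage to the scales $\rho_j$, demand control of $v_{r_j;S(r_j)}$ \emph{at every scale down to $0$}, i.e.\ one must rule out the regular structure ``hiding'' below the scale at which $v_{r_j;S(r_j)}$ is close to $w_0$. That uniform fine-scale control is precisely what the quantitative analysis of the singular set provides: a uniform modulus of continuity for the rescalings toward the blow-up at singular points, obtained from the Weiss and Monneau monotonicity formulas and the epiperimetric inequality in the lineage of \cite{FS}, together with \cite{BH2}. This produces an $R_0>0$ with $0 \in \text{Sing}(v_{r;S(r)})$ for all $r<R_0$.

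For the stratum statement, for $r<R_0$ I would write the blow-up of $v_{r;S(r)}$ at $0$ as $\tfrac12 x^T M_r x$, a nonnegative quadratic polynomial with $\tr M_r = 1$, so that the stratum of $0$ has dimension $\dim\ker M_r$. A diagonal compactness argument of the same flavour, again using the uniform modulus at singular points to compare the blow-up of $v_{r;S(r)}$ with that of $w_r$ (whose blow-up at $0$ is the fixed polynomial $w_0=\tfrac12 x^T M x$ for every $r$), should give $M_r \to M$ as $r\to 0$. Since $\rk$ is lower semicontinuous on symmetric matrices, $\rk M_r \ge \rk M$, hence $\dim\ker M_r \le \dim\ker M = k$, once $r$ is below some $R\le R_0$; so for $r<R$ the point $0$ is a singular point of $v_{r;S(r)}$ lying in the $m$-th stratum with $m\le k$, which is the claim.
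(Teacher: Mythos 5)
Your first-part strategy diverges from the paper's and, as written, has a real gap. The paper's proof of the regularity dichotomy at $0$ does not use a barrier at all; it is a pure blow-up comparison. Since $0\in\mathrm{Reg}(v_{r_j;S(r_j)})$, the blow-up of $v_j:=v_{r_j;S(r_j)}$ at $0$ is a half-space solution $P_{r_j}=\max\{x\cdot n_j,0\}^2$, and after passing to a subsequence $P_{r_j}\to P_0$. On the other hand $v_j\to w_0=\tfrac12 x^TMx$ uniformly (this is exactly Remark\refthm{UsefulShit}, using $S(r)\to 0$ and the blow-up of $w$). Then the entire contradiction comes from Weiss's isolation estimate (\cite[Lemma~2]{W}): $\|P_0-x^TMx\|_\infty\geq\gamma>0$. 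Choosing an intermediate scale where both a half-space profile and the quadratic profile $\epsilon$-approximate the same rescaling and applying the triangle inequality with $\epsilon<\gamma/2$ finishes. No Monneau monotonicity, no epiperimetric inequality, and no $C^1$ convergence are needed.

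The step I cannot make work in your version is the plan to ``run the scheme from the proof of Theorem\refthm{CCMVT1}'' with $h=(\text{difference of the two solutions})-\mu D_e(\text{barrier})$. That scheme derives a contradiction from $0$ lying on \emph{both} free boundaries, using crucially that (i) one noncontact set is nested in the other so the difference is nonnegative, and (ii) the difference is a strict supersolution, $L(w_s-w_r)=s^{-n}-r^{-n}<0$. Neither ingredient is present here: $v_{r;S(r)}$ and $w_r$ solve obstacle problems for \emph{different} operators ($\Delta$ versus $L_r$) with the \emph{same} source, so there is no sign for $\Delta(v_{r;S(r)}-w_r)$ or $L_r(v_{r;S(r)}-w_r)$, no nesting of noncontact sets, and hence no nonnegativity for the difference on which the comparison would rest. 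Moreover $0$ \emph{does} lie on both free boundaries by construction (Lemma\refthm{GZFB} and $0\in\mathrm{Sing}(w_r)$), so an argument whose output is ``the free boundaries cannot share a point'' would contradict the setup itself, not produce the desired dichotomy. The additional machinery you invoke (uniform modulus at singular points via Weiss--Monneau and epiperimetric in the style of \cite{FS}) is where actual content would have to come from, but it is left unspecified, it is substantially stronger than what is needed, and it is exactly the kind of Laplacian-specific technology this paper is trying \emph{not} to rely on.

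Your second part is essentially the paper's: the blow-up of $v_{r;S(r)}$ at $0$ is a nonnegative quadratic $\tfrac12 x^TM_rx$ with $\mathrm{tr}\,M_r=1$, the blow-up of $w_r$ is $\tfrac12 x^TMx$ for every $r$, and lower semicontinuity of rank (equivalently, the quantitative isolation of matrices with strictly larger kernel) gives $\dim\ker M_r\leq\dim\ker M$ for $r$ small. The paper phrases it as a contradiction by isolation rather than as convergence $M_r\to M$, but the content is the same, and your phrasing is fine.
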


\begin{proof} Suppose there does not exist an $R > 0$ such that $0 \in \text{Sing}(v_{r;S(r)})$ for all $r < R.$
Then there exists $r_j \downarrow 0$ such that $0 \in \text{Reg}(v_{r_j;S(r_j)})$ for all $j.$  
Fix $\epsilon > 0$ to be chosen later, and
to simplify notation, we will let $v_j := v_{r_j;S(r_j)}$ for the duration of this proof.  Using our assumption, for each
$r_j$ there exists $n_j \in \partial B_1$ so that with $P_{r_j} := \max \{(x \cdot n_j),0\}^2$ we have
$$\frac{v_{r_j;S}(\rho x)}{\rho^2} \rightarrow P_{r_j}(x) \, \text{ uniformly as } \rho \rightarrow 0.$$
However, there exists a subsequence of $r_j$, which we denote again by 
$r_j,$ such that $n_j \rightarrow n_0$ and so
$$P_{r_j} \rightarrow P_0 = \max\{(x \cdot n_0),0\}^2$$
uniformly.  Hence, given any $\epsilon > 0$ there exists an $\mathcal{R}_1 > 0$ and a $\mathcal{K} > 0$ such 
that if $r _j < \mathcal{R}_1$ and $\rho < \mathcal{K},$ then
\begin{equation}
    \left| \left| \frac{v_{j}(\rho x)}{\rho^2} - P_0(x) \right| \right|_{\infty} \leq \epsilon.
\label{eq:Pep}
\end{equation}
On the other hand $v_{j} \rightarrow w_0 = \frac{1}{2}(x^TMx)$ uniformly in $\ohclosure{B_1}$ by
Remark\refthm{UsefulShit}\!, so there exists an $\mathcal{R}_2$ such that $r_j \leq \mathcal{R}_2$
implies
\begin{equation}
   \left| \left| \frac{v_{j}(\rho x)}{\rho^2} - x^{T} Mx \right| \right|_{\infty} \leq \epsilon.
\label{eq:Mep}
\end{equation}
By applying \cite[Lemma 2]{W} we know that there exists a constant $\gamma > 0$ so that
\begin{equation}
     \left| \left| x^{T} Mx - P_0(x) \right| \right|_{\infty} \geq \gamma \;.
\label{eq:WeisIsol}
\end{equation}
Now we fix $\epsilon < \gamma/2$ and use the triangle inequality combined with
Equations\refeqn{Pep}\!\!,\refeqn{Mep}\!\!, and\refeqn{WeisIsol}to get a contradiction.

At this point we have the first part of the theorem.  To show the second part we essentially repeat the
argument but replace the use of \cite[Lemma 2]{W} with the observation that a nonnegative matrix $M$
can be approximated arbitrarily well with matrices with lower dimensional kernels, but it will stay isolated from all of
the matrices with higher dimensional kernels.  To be more specific, the main difference from the first part
of the proof is that in place of $P_{r_j}$ we would have a sequence $x^{T} M_j \, x$ where the kernels of
the $M_j$ have dimension greater than the kernel of the $M,$ and this leads to a contradiction.
\end{proof}



\bibliographystyle{plain}
\bibliography{BIB}
\end{document}